\newtheorem{theorem}{Theorem}[section]
\newtheorem{definition}[theorem]{Definition}
\newtheorem{proposition}[theorem]{Proposition}
\newtheorem{corollary}[theorem]{Corollary}
\newtheorem{remark}[theorem]{Remark}
\newcommand{\rr}{\mathbb{R}}
\newcommand{\D}{\mathbb{D}}
\newcommand{\e}{{\mathbf{e}}}
\newcommand{\edag}{{\mathbf{e}^\dagger}}
\newcommand{\bc}{\mathbb{B}\mathbb{C}}
\newcommand{\R}{{\mathbb R}}
\newcommand{\C}{{\mathbb{C}}}
\newcommand{\n}{{\mathbb{N}}}
\newcommand{\li}{{\bf i}}
\newcommand{\ji}{{\bf j}}
\newcommand{\ki}{{\bf k}}
\begin{document}
\title{A bicomplex proportional fractional $(\vartheta,\varphi)-$weighted Cauchy-Riemann operator using Riemann-Liouville derivatives with respect to an hyperbolic-valued function}
\small{
\author {Jos\'e Oscar Gonz\'alez-Cervantes$^{(1,2)}$, Juan Adri\'an Ram\'irez-Belman$^{(2)}$ \\ and \\ Juan Bory-Reyes$^{(3)\footnote{corresponding author}}$}
\vskip 1truecm
\date{\small $^{(1)}$ Departamento de Matem\'aticas, ESFM-Instituto Polit\'ecnico Nacional. 07338, Ciudad M\'exico, M\'exico\\ Email: jogc200678@gmail.com\\
$^{(2)}$ SEPI, ESFM-Instituto Polit\'ecnico Nacional. 07338, Ciudad M\'exico, M\'exico\\ Email: adrianrmzb@gmail.com\\
$^{(3)}$ {SEPI, ESIME-Zacatenco-Instituto Polit\'ecnico Nacional. 07338, Ciudad M\'exico, M\'exico}\\Email: juanboryreyes@yahoo.com
}}

\maketitle
\begin{abstract}
Based on the Riemann-Liouville derivatives with respect to functions taking values in the set of hyperbolic numbers, we consider a novel bicomplex proportional fractional $(\vartheta,\varphi)-$weighted Cauchy-Riemann operator, involving weights hyperbolic orthogonal bicomplex functions. This operator is defined for the first time here, and its associated fractional Borel-Pompeiu formula is proved as the main result.
\end{abstract} 

\noindent
\textbf{Keywords.} Bicomplex analysis;  Proportional fractional integrals and derivatives  with respect to another function, Riemann-Liouville derivatives; Cauchy-Riemann operator; Borel-Pompeiu formula.\\
\textbf{MSC20220 Classification:} 26A33, 30E20, 30G35

\section{Introduction} 
Fractional calculus deals with non-integer order integration and differentiation, and it is a natural generalization to the classical calculus, which became a powerful and widely used tool for better modeling of some real world problems and control of processes in variety areas of applied science and engineering, see \cite{GM, KST, OS, O, P, SKM} for more detail about fractional operators, including their properties and applications. A brief historical survey of the theory along classical lines is addressed in \cite{MR, Ro}. 

During last decades, many researchers have proposed different ways of defining fractional derivative operators, ranging from the most recognized and strongly established Riemann-Liouville and Caputo to others recently arose.
 
Several articles considering the so-called proportional fractional integrals and derivatives of a function with respect to another function came to light. The list of contributions is quite long, such as \cite{A, KST, jarad2020more, jarad2017generalized, jarad2020more18, jarad2017new}, to cite just a few.

The algebra of bicomplex numbers was first introduced in 1892 by Corrado Segre \cite{S} and also studied in \cite{Sco, Spa, CC}. The
properties of holomorphic functions defined on bicomplex numbers goes back as far as the book of Price \cite{Pr}. Then, without pretense of completeness, we direct the attention of the reader to \cite{EMDA}, which takes into account the theory of bicomplex holomorphic functions of
bicomplex variables. 

Fractional bicomplex calculus is a very recent topic of research, see \cite{CTOP, ATV, BG} and the references therein. In \cite{BG}, inspired from the work \cite{ATV}, where a Borel-Pompieu formula induced by a complex $\psi$-weighted Cauchy-Riemann operator is derived, the authors developed a bicomplex $(\vartheta,\varphi)-$weighted framework of fractional Cauchy-Riemann operator.

We combine the proportional fractional calculus with respect to functions with the hypercomplex analysis of solutions of the bicomplex $(\vartheta,\varphi)-$weighted fractional Cauchy-Riemann operator. This research trend generalizes and strengthens the achievements of \cite{BG} in a different direction: the managing of the proportional derivation with respect to an hyperbolic-valued function.

\section{Preliminaries}
To make the presentation self contained, we first give a brief exposition of the basic concepts of fractional proportional operators and of the weighted bicomplex holomorphic functions employed throughout the work. The materials provided can be found in \cite{jarad2020more, jarad2017generalized, jarad2020more18, jarad2017new} and \cite{CTOP, ATV, BG} respectively.
\subsection{Proportional fractional integrals and derivatives of a function with respect to a certain function}
Consider the continuous functions $\chi_{0}, \chi_{1}:[0,1]\times \R \rightarrow [0,\infty)$ such that
\begin{align*}
    \lim_{\sigma \to 0^{+}}\chi_{1}(\sigma,t)=1, \lim_{\sigma \to 0^{+}}\chi_{0}(\sigma,t)=0, \lim_{\sigma \to 1^{-}}\chi_{1}(\sigma,t)=0, \lim_{\sigma \to 1^{-}}\chi_{0}(\sigma,t)=1.
\end{align*}
Given $\phi \in C^{1}(\rr)$ such that $\phi(t)>0$, for all $t\in \rr$, then the proportional derivative of $f\in C^{1}(\rr)$ with respect to $\phi$ of order $\sigma \in [0,1]$, is defined to be
\begin{equation}
    (D^{\sigma, \phi} f )(t):=\chi_{1}(\sigma,t)f(t)+\chi_{0}(\sigma,t) \frac{f'(t)}{\phi'(t)}.
\end{equation}

In particular, for $\chi_{1}(\sigma,t)=1-\sigma$ and $\chi_{0}(\sigma,t)=\sigma$ we have
\begin{equation}
    (D^{\sigma, \phi} f )(t)=(1-\sigma)f(t)+\sigma \frac{f'(t)}{\phi'(t)}.
\end{equation}
If $\phi(t)\equiv t,$ we drop the subscript $\phi$ on $D^{\sigma, \phi}$.

The proportional integral of $f$ with respect to $\phi$ and order $n\in \n$ reads
$$({_{a}}I^{n,\sigma,\phi} f)(t)=\frac{1}{\sigma^{n}\Gamma(n)}\int_{a}^{t}e^{\frac{\sigma-1}{\sigma}(\phi(t)-\phi(\tau))}(\phi(t)-\phi(\tau))^{n-1}f(\tau)\phi^{'}(\tau)d\tau,$$
where $\Gamma$ is the gamma function.

Set $\alpha \in \C$, with $0<\Re\alpha<1$. The left and right proportional fractional integrals of $f\in AC^{1}([a,b])$ with order $\alpha$, in the Riemann-Liouville setting are defined by
$$({_{a}}I^{\alpha, \sigma}f)(t)=\frac{1}{\sigma^{\alpha}\Gamma(\alpha)}\int_{a}^{t}e^{\frac{\sigma-1}{\sigma}(t-\tau)}(t-\tau)^{\alpha-1}f(\tau)d\tau$$ 
and
$$(I_{b}^{\alpha, \sigma}f)(t)=\frac{1}{\sigma^{\alpha}\Gamma(\alpha)}\int_{t}^{b}e^{\frac{\sigma-1}{\sigma}(\tau- t)}(\tau-t)^{\alpha-1}f(\tau)d\tau$$
respectively.

Important case is where $\sigma=1$ to have  
\begin{equation}\label{fract_RL_1}
({}_a I^{\alpha,1} f) (t)  = \frac{1}{  \Gamma(\alpha)} \int_a^t ( t-\tau ) ^{\alpha-1}f (\tau)  d\tau,
\end{equation}
which is the well-known left-fractional integral of $f$ of order $\alpha$ in the Riemann-Liouville sense.

The left and right proportional derivatives of $f$ are defined by
\begin{align*}
    ({_{a}}D^{\alpha,\sigma}f)(t)=D^{n,\sigma}{_{a}}I^{n-\alpha, \sigma}f(t)\quad \text{and} \quad (D_{b}^{\alpha,\sigma}f)(t)=D^{n,\sigma}I_{b}^{n-\alpha, \sigma}f(t) 
\end{align*}
where $D^{n,\sigma}:=\overbrace{D^{\sigma}\circ \cdots \circ D^{\sigma}}^{n-times}$, with $n=[\Re\alpha]+1$.  

The left fractional proportional integral with respect to $\phi$ is given by
\begin{align*}
    ({_{a}}I^{\alpha,\sigma,\phi}f)(t):=\frac{1}{\sigma^{\alpha}\Gamma(n)}\int_{a}^{t}e^{\frac{\sigma-1}{\sigma}(\phi(t)-\phi(\tau))}(\phi(t)-\phi(\tau))^{\alpha-1}f(\tau)\phi^{'}(\tau)d\tau,
\end{align*}
while the right fractional proportional integral reads
\begin{align*}
    (I_{b}^{\alpha,\sigma,\phi}f)(t):=\frac{1}{\sigma^{\alpha}\Gamma(n)}\int_{t}^{b}e^{\frac{\sigma-1}{\sigma}(\phi(\tau)-\phi(t))}(\phi(\tau)-\phi(t))^{\alpha-1}f(\tau)\phi^{'}(\tau)d\tau,
\end{align*}
The left and the right fractional proportional derivatives with respect to $\phi$ are
\begin{align}\label{equa11}
    ({_{a}}D^{\alpha,\sigma,\phi}f)(t)=D^{n,\sigma,\phi}{_{a}}I^{n-\alpha, \sigma,\phi}f(t)\  \text{and} \ (D_{b}^{\alpha,\sigma,\phi}f)(t)=D^{n,\sigma,\phi}I_{b}^{n-\alpha, \sigma,\phi}f(t) 
\end{align}
where $D^{n,\sigma,\phi}:=\underbrace{D^{\sigma,\phi}\circ \cdots \circ D^{\sigma,\phi}}_{n-times}$, with $n=[\Re\alpha]+1$.
Moreover, 
\begin{align}\label{TFProFracFunc}
    {_{a}}D^{\alpha,\sigma,\phi}\circ{_{a}}I^{\alpha, \sigma,\phi}f(t)=f(t)\quad \text{and} \quad D_{b}^{\alpha,\sigma,\phi}\circ I_{b}^{\alpha, \sigma,\phi}f(t)=f(t). 
\end{align}

\subsection{The Hausdorff derivative model}
The Hausdorff derivative has a definite statistical interpretation and many real-world applications to a wide range of problems, see for instance \cite{cai2018fractal,chen2017fractal, chen2006time, chenliang}.

The Hausdorff derivative is a local differential operator and its computational costs are far lower than for the global fractional derivative, see \cite{At, 2019hausdorff, Ji} for more details.
 
Let us look at the important natural situation of the distance covered by a particle moving with the fractal behaviors of delay and jump in porous media, which can be measured in fractal time by 
\begin{equation}\label{Fract_time}
l(\tau)=v\cdot (\tau-a)^{\alpha}
\end{equation}
where $v$ is the velocity, $l$ denotes the distance, $\tau$ and $a$ represent the current and initial time, and $\alpha$ is the time fractal dimensionality. 

We start with the variable velocity problem, the distance covered is given by
$l(t)=\int_{a}^{t}v(\tau)d(\tau-a)^{\alpha}$ 
and the velocity on fractal medium is defined by
\begin{equation}\label{Lim_Hff_dev}
\frac{dl}{d(t-a)^{\alpha}}=\lim_{t\to c}\frac{l(t)-l(c)}{(t-a)^{\alpha}-(c-a)^{\alpha}}=\frac{1}{\alpha(t-a)^{\alpha-1}}\frac{dl}{dt}
\end{equation}
where $t$ and $c$ represent the final and internal time respectively. 

The limit version of the Hausdorff derivative is given by \eqref{Lim_Hff_dev} and its general formulation is 
\begin{equation}\label{Lim_Hff_dev_G}
\frac{dl}{dt^{\alpha}}=\lim_{t\to c}\frac{l(t)-l(c)}{t^{\alpha}-c^{\alpha}}=\frac{1}{\alpha t^{\alpha-1}}\frac{dl}{dt}.
\end{equation}
\noindent
The only difference between \eqref{Lim_Hff_dev} and \eqref{Lim_Hff_dev_G} is the presence of initial time in \eqref{Lim_Hff_dev}. 

According to \eqref{Fract_time}, we can get a precise estimate of the location of the particle at time $\tau$ as
\begin{equation}\label{Fract_time_Loc}
l(\tau)=v(t-a)^{\alpha}-v(t-\tau)^{\alpha},
\end{equation}
where $t$ is the final time, the first term on the right-hand side represents the total movement distance, and the second term the movement distance between $t$ and $\tau$. Now we consider a variable velocity problem, which can be derived similarly to that before. The differential form of equation \eqref{Fract_time_Loc} can be established as
\begin{equation}\label{difeq_fract}
dl=-vd(t-\tau)^{\alpha}.
\end{equation}
The integral formulation of equation \eqref{difeq_fract} is stated by
\begin{equation}\label{fract_RL_0}
l(t)=\int_{a}^{t}-v(\tau)d(t-\tau)^{\alpha}.
\end{equation}
Note that \eqref{fract_RL_0} reveals to be identical with \eqref{fract_RL_1} except the constant before the integration.
\subsection{A complex $\psi$-weighted Cauchy-Riemann operator}  
Let $z,w\in \mathbb C$. A product can be defined for $z$ and $w$ by  
\begin{align*}
\langle z   ,  w  \rangle_{\mathbb C}:= \frac{1}{2}\left(  \overline z  w + \overline w z \right)=
\frac{1}{2}\left(  z   \overline w +  w  \overline z \right).
\end{align*}
Consider functions $\psi_0 , \psi_1  \in  C^1(\C,\C)$  such that  $\langle \psi_0   ,  \psi_1   \rangle_{\mathbb C} = 0 $ on $\C$.
Let us introduce one piece of notation: $\psi:=(\psi_0,\psi_1 )$, where $\psi_0 = p_0  + ip_1$ and $\psi_1 =  q_0+ iq_1$. Then, we get that $\langle \psi_0, \psi_1 \rangle_{\mathbb C} \equiv  0$ on $\C$ if and only if  $p_ 0   \psi_1  = - i q_1\psi_0$  on $\C$. 

From the above, the $\psi$-weighted Cauchy-Riemann operator is defined by
\begin{align*}
\mathcal D_{\psi} :=  \psi_0 \frac{\partial}{\partial x }  +   \psi_1 \frac{\partial}{\partial y }.   
\end{align*}
We recall the definition of $\psi-$weighted holomorphicity, more details in \cite [Definition 2.8]{ATV}.
\begin{definition}
Let $\Omega \subset \mathbb C$ a regular domain, we say that a continuously differentiable function  $f: \Omega \rightarrow \mathbb C$ is $\psi-$weighted holomorphic in $\Omega$ if and only if $f$ is in the kernel of $\mathcal D_{\psi}$.
\end{definition}

\noindent
Note that classic holomorphicity arises when $\psi_0=1 $ and $ \psi_1 = {i}$. 

We end this section by recalling two important results about some integral representation associated to $\psi$-weighted Cauchy-Riemann operator: $\psi-$weighted Gauss theorem and $\psi-$weighted Cauchy-Pompeiu integral formula, see \cite [Theorem 4.1, Theorem 5.1]{ATV}.
\begin{theorem} Let $\Omega \subset \C$ be a regular domain and let $f$, $\psi_0 = p_0 + { i}p_1$, $\psi_1 = q_0 + {i}q_1$ in  $C^1(\Omega, \C) \cap  C(\overline \Omega, \C)$. Then
\begin{align*}  
\int_\Omega\left( 
{\mathcal D}_{\psi} f(w) + ( \frac{\partial p_0}{\partial x }   +  \frac{\partial q_0}{\partial y }  )f + ( \frac{\partial p_1}{\partial x }  +  \frac{\partial q_1}{\partial y }  ) {  i} f
\right)
dxdy =
\int_{\partial \Omega}
 f d\rho_{\psi}(w), 
\end{align*}
where $w=x+iy$ and $d\rho_{\psi}(w):= \psi_0(w)dy - \psi_1(w)dx.$
\end{theorem}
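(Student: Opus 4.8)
The plan is to recognize the bracketed expression on the left-hand side as an exact plane divergence and then to invoke the classical Green (Gauss) theorem. First I would expand ${\mathcal D}_{\psi} f = \psi_0 \partial_x f + \psi_1 \partial_y f$ and record the product-rule identity
\[
\frac{\partial}{\partial x}(\psi_0 f) + \frac{\partial}{\partial y}(\psi_1 f) = {\mathcal D}_{\psi} f + \left( \frac{\partial \psi_0}{\partial x} + \frac{\partial \psi_1}{\partial y} \right) f .
\]
Writing $\psi_0 = p_0 + i p_1$ and $\psi_1 = q_0 + i q_1$ and splitting into real and imaginary parts gives
\[
\frac{\partial \psi_0}{\partial x} + \frac{\partial \psi_1}{\partial y} = \left( \frac{\partial p_0}{\partial x} + \frac{\partial q_0}{\partial y} \right) + \left( \frac{\partial p_1}{\partial x} + \frac{\partial q_1}{\partial y} \right) i ,
\]
so that the integrand on the left of the claimed formula is precisely $\partial_x(\psi_0 f) + \partial_y(\psi_1 f)$.

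Next I would apply Green's theorem in divergence form, namely $\int_\Omega (\partial_x P + \partial_y Q)\, dx\, dy = \int_{\partial \Omega}(P\, dy - Q\, dx)$ with $\partial\Omega$ positively oriented, to the complex-valued field $(P,Q) = (\psi_0 f, \psi_1 f)$ — applying the real scalar statement separately to real and imaginary parts. This produces
\[
\int_\Omega \Big( \partial_x(\psi_0 f) + \partial_y(\psi_1 f) \Big)\, dx\, dy = \int_{\partial\Omega} \big( \psi_0 f\, dy - \psi_1 f\, dx \big) = \int_{\partial\Omega} f\, d\rho_{\psi}(w),
\]
the last equality being just the definition $d\rho_{\psi}(w) = \psi_0(w)\, dy - \psi_1(w)\, dx$. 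Chaining the two computations yields the assertion. Note that no hypothesis on the pair $\psi=(\psi_0,\psi_1)$ beyond $C^1$-smoothness is used in this identity; in particular the orthogonality $\langle \psi_0,\psi_1\rangle_{\mathbb C}=0$ plays no role here.

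The only point requiring care is the applicability of Green's theorem under the stated regularity: $f,\psi_0,\psi_1$ are assumed $C^1$ only in the open set $\Omega$ and merely continuous on $\overline\Omega$. I expect this to be dispatched by the standard exhaustion argument — choosing regular subdomains $\Omega_k \Subset \Omega$ increasing to $\Omega$, applying the classical theorem on each $\Omega_k$, and letting $k\to\infty$, using the continuity of the data on $\overline\Omega$ to pass to the limit in the boundary integrals and a dominated-convergence argument for the area integral — or else, as is customary for results of this type, by simply requiring $\Omega$ to be regular enough that the classical Green formula applies verbatim. This is the same bookkeeping already implicit in the complex prototype \cite{ATV}, so it does not represent a genuinely new obstacle.
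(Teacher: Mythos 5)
Your argument is correct: the integrand is exactly the divergence $\partial_x(\psi_0 f)+\partial_y(\psi_1 f)$, and Green's theorem in divergence form gives the boundary integral $\int_{\partial\Omega}(\psi_0 f\,dy-\psi_1 f\,dx)=\int_{\partial\Omega}f\,d\rho_\psi(w)$; your remark that the orthogonality of $(\psi_0,\psi_1)$ is not used is also accurate. The paper itself offers no proof of this statement — it is recalled verbatim from \cite[Theorem 4.1]{ATV} — and your product-rule-plus-Green argument (with the exhaustion remark to cover the $C^1(\Omega)\cap C(\overline\Omega)$ regularity) is precisely the standard route taken there.
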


\begin{theorem} \label{weighted Cauchy-Pompeiu}
Let $\Omega \subset \C$ be a regular domain, $f \in  C^1(\Omega,\C)\cap C(\Omega,\C)$ and $\psi_0, \psi_1\in \C$. For any $z\in \Omega$ the following $\psi-$weighted Cauchy-Pompeiu formula holds
\begin{align*}  f(z) c_{\psi} =
\int_{\partial \Omega}  
f( w )E_{\psi}( w , z)d\rho_{\psi}( w ) - \int_\Omega 
E_{\psi}( w ,z) {\mathcal D}_{\psi} f( w )dx d y,
\end{align*}
 where $ w  = x + {  i} y$,  
\begin{align*}
c_{\psi} = \sin^2(\alpha) \int_0^{2\pi}
\frac{\psi_0
     \cos \theta + \psi_1 \sin \theta}{
\left( \dfrac{\cos \theta}
{r_0^2} - \dfrac{
\sin \theta \cos \alpha}{r_0r_1}
\right)
 \psi_0 +
\left(
\dfrac{\sin \theta}
{r^2_1} - 
 \dfrac{ \cos \theta \cos \alpha}{
r_0r_1}
\right)
 \psi_1}
d\theta
\end{align*}
and $E_\psi( \cdot, \cdot )$ is the $\psi$-Cauchy-type kernel, see \cite[Section 3]{ATV}.  
\end{theorem}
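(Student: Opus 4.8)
The plan is to derive the formula from the $\psi$-weighted Gauss theorem stated above, applied to $\Omega$ with a small curve around $z$ removed, together with the structural properties of the Cauchy-type kernel $E_\psi$. Observe first that here $\psi_0=p_0+ip_1$ and $\psi_1=q_0+iq_1$ are \emph{constants}, so all the coefficients $\partial p_0/\partial x+\partial q_0/\partial y$ and $\partial p_1/\partial x+\partial q_1/\partial y$ in that theorem vanish; hence for any regular domain $U$ and any $g\in C^1(U,\C)\cap C(\overline U,\C)$ it reduces to $\int_U \mathcal D_\psi g\,dxdy=\int_{\partial U} g\,d\rho_\psi$.

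Next, from \cite[Section 3]{ATV} I would record the three features of $E_\psi(\cdot,z)$ that the argument needs: (i) $E_\psi(\cdot,z)$ is locally integrable near $w=z$, with singularity of order $|w-z|^{-1}$; (ii) $\mathcal D_\psi E_\psi(\cdot,z)\equiv 0$ on $\C\setminus\{z\}$, i.e. $E_\psi(\cdot,z)$ is $\psi$-weighted holomorphic off the diagonal; and (iii) $w\mapsto E_\psi(w,z)$ is positively homogeneous of degree $-1$ about $z$, that is $E_\psi(z+\lambda\zeta,z)=\lambda^{-1}E_\psi(z+\zeta,z)$ for $\lambda>0$. These hold because $\mathcal D_\psi$ is a non-degenerate constant-coefficient first-order operator, hence real-linearly equivalent to a scalar multiple of $\partial/\partial\bar\zeta$, whose fundamental solution is $1/(\pi\zeta)$, and $E_\psi$ is the pull-back of the latter under the corresponding change of variables.

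Now fix $\varepsilon>0$ so small that the closed ellipse $\overline{B_\varepsilon(z)}$ adapted to the frame $\{\psi_0,\psi_1\}$ lies inside $\Omega$, and put $\Omega_\varepsilon:=\Omega\setminus\overline{B_\varepsilon(z)}$. This is a regular domain on whose closure $w\mapsto f(w)E_\psi(w,z)$ is $C^1$, so the reduced Gauss theorem applies. Using the Leibniz rule $\mathcal D_\psi(fE_\psi)=(\mathcal D_\psi f)E_\psi+f\,\mathcal D_\psi E_\psi$ together with (ii), and writing $\partial\Omega_\varepsilon=\partial\Omega\cup(-\partial B_\varepsilon(z))$, I obtain
\[
\int_{\Omega_\varepsilon} E_\psi(w,z)\,\mathcal D_\psi f(w)\,dxdy=\int_{\partial\Omega} f(w)E_\psi(w,z)\,d\rho_\psi(w)-\int_{\partial B_\varepsilon(z)} f(w)E_\psi(w,z)\,d\rho_\psi(w).
\]
Letting $\varepsilon\to 0^+$: by (i) the left-hand side tends to $\int_\Omega E_\psi\,\mathcal D_\psi f\,dxdy$; the first boundary integral is $\varepsilon$-independent; and for the last one, splitting $f(w)=f(z)+(f(w)-f(z))$, the term $f(z)\int_{\partial B_\varepsilon(z)}E_\psi\,d\rho_\psi$ is independent of $\varepsilon$ by (iii) together with the degree-one homogeneity of $d\rho_\psi=\psi_0\,dy-\psi_1\,dx$ under dilation, while the remainder is $O\big(\sup_{\partial B_\varepsilon(z)}|f(w)-f(z)|\big)\to 0$ since $\int_{\partial B_\varepsilon(z)}|E_\psi|\,|d\rho_\psi|$ stays bounded. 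Rearranging yields the asserted identity with $c_\psi=\int_{\partial B_1(z)}E_\psi(w,z)\,d\rho_\psi(w)$.

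The remaining step, and the main computational obstacle, is to identify this $c_\psi$ with the displayed trigonometric integral. For this one parametrizes the contracting ellipse as $w=z+\varepsilon(\cos\theta\,\mathbf u_0+\sin\theta\,\mathbf u_1)$, $\theta\in[0,2\pi]$, where $\mathbf u_0,\mathbf u_1$ are built from the vectors dual to $\psi_0,\psi_1$ in the change of variables that diagonalizes $\mathcal D_\psi$, and then substitutes the explicit expressions for $E_\psi$ and for $d\rho_\psi$ and simplifies. The quantities $r_0=|\psi_0|$, $r_1=|\psi_1|$ and the angle $\alpha$ between $\psi_0$ and $\psi_1$, for which $\cos\alpha=\langle\psi_0,\psi_1\rangle_{\C}/(r_0r_1)$, enter through the Gram matrix of $\{\psi_0,\psi_1\}$ and produce both the overall factor $\sin^2\alpha$ and the quadratic form in the denominator; this bookkeeping is precisely the content of \cite[Section 5]{ATV}, to which I would defer for the explicit value, noting in passing that $c_\psi\neq 0$ exactly when $\psi_0,\psi_1$ are $\R$-linearly independent, i.e. when $\mathcal D_\psi$ is non-degenerate.
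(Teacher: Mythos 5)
This statement is not proved in the paper at all: it is recalled verbatim from the cited source (\cite[Theorem 5.1]{ATV}), together with the kernel $E_\psi$ whose construction the paper also defers to \cite[Section 3]{ATV}. So there is no in-paper proof to compare against; what can be judged is whether your excision argument is sound, and it is essentially the standard (and surely the source's) route: constant weights kill the divergence terms in the $\psi$-weighted Gauss theorem, one applies the reduced identity to $f\,E_\psi(\cdot,z)$ on $\Omega$ minus a small neighbourhood of $z$, uses $\mathcal D_\psi E_\psi(\cdot,z)=0$ off the singularity, and passes to the limit using local integrability, degree $-1$ homogeneity of the kernel and degree $+1$ scaling of $d\rho_\psi$, obtaining $c_\psi=\int_{\partial B_1(z)}E_\psi(w,z)\,d\rho_\psi(w)$.

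Two caveats keep this at the level of a correct outline rather than a complete proof. First, everything hinges on properties (i)--(iii) of $E_\psi$ and, above all, on identifying the limiting boundary integral with the displayed trigonometric expression in $r_0$, $r_1$, $\alpha$; you defer exactly this computation to \cite[Section 5]{ATV}, and that identification is the only content of the statement that goes beyond a generic Pompeiu argument, so as written the proposal proves the formula with an unspecified constant rather than with the stated $c_\psi$. Second, your reduction of $\mathcal D_\psi$ to a multiple of $\partial/\partial\bar\zeta$ (hence existence, homogeneity and local integrability of $E_\psi$, and $c_\psi\neq 0$) requires the non-degeneracy of the pair $(\psi_0,\psi_1)$, i.e.\ $\R$-linear independence; the theorem as quoted leaves this hypothesis implicit in the existence of $E_\psi$, so you should state it explicitly before invoking the equivalence with the classical Cauchy kernel. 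With those two points supplied (or honestly attributed to \cite{ATV}), the argument is correct.
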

\subsection{Basic review of the bicomplex function theory}
The algebra of bicomplex numbers, denoted by $\bc$, is generated by two commuting imaginary units $\li$ and $\ji$  such that 
$$\li^{2}=\ji^{2}=-1; \li\ji=\ji\li.$$
Let us introduce two isomorphic algebra to $\C$: 

The isomorphic algebras to $\C$: $\C(\li) := \{x + \li y : x, y \in {\R}\}$ and $\C(\ji) := \{x + \ji y : x, y \in {\R}\}$ occur at the same time in $\bc$. Throughout the paper $\C$ is understood as $\C(\li)$.  

We will write all the bicomplex numbers as $Z=z_{1}+ z_{2}\ji;\, z_{1}, z_{2}\in {\C(\li)}.$

The hyperbolic unit $\ki$ (a unit which squares to $1$) can be factor into a product of the units $\li$ and $\ji$.

The bicomplex algebra $\bc$ has two distinguished zero divisors $\{\e, \edag \}$ over $\C(\li)$ and $\C(\ji)$, which are 
\begin{displaymath}
\e = \frac{1}{2}(1 + \ki), \hspace{5mm} \edag = \frac{1}{2}(1 - \ki),
\end{displaymath}
\begin{displaymath}
\e \edag = 0, \e^2 = \e, (\edag)^2 = \edag; \e + \edag= 1, \e - \edag= {\ki}.
\end{displaymath}
The zero divisors $\{\e, \edag \}$ form another basis of $\bc$ so-called idempotent basis. The $\C(\li)-$idempotent representation of $Z\in \bc$ is given by 
$$Z=z_1\e+z_2\edag,$$
where $z_1,z_2\in\C.$
 
According to the above representation, the addition and multiplication  of bicomplex numbers can be realized component-wise. Indeed,
if $Z =z_1 \e + z_2 \edag $ and $W =w_1 \e + w_2 \edag $ are two bicomplex numbers, where $z_1, z_2, w_1, w_2 \in \C(\li)$, then
$$Z +W=(z_1+w_1) \e + (z_2+w_2) \edag , \ Z W=(z_1w_1) \e + (z_2w_2) \edag .$$
A special subalgebra of $\bc$ is the set of hyperbolic numbers, defined by
\[\D := \{\lambda_1 + {\bf k} \lambda_2 \ \mid \ \lambda_1, \lambda_2 \in \mathbb R \}.\]
We refer to \cite{EMDA} for more information on these numbers.  

For $Z,W\in \bc$ the notation $Z\preceq W$ means that $Z-W\in \D^{+}$, where  
\begin{displaymath}
\D^{+} := \{ \lambda_1\e + \lambda_2\edag \ |\ \lambda_1 \in \R^{+}\ \land\ \lambda_2 \in \R^{+}\}.
\end{displaymath}
It is relation $\preceq$ that makes a partial order over $\D$ allowable, see for instance \cite{EMDA}.

We will use this partial order to define the hyperbolic modulus of $Z=z_1\e +z_2\edag \in \bc$ as 
$$|Z|_{\ki}:=|z_1|\e + |z_2|\edag,$$
which induces the topology in $\bc$ of the bicomplex balls 
$$B(W, r) := \{Z \in \bc \ \mid \   \  |Z-W|_{\bf k}   \prec r \}$$
with $r\in \D^{+}$ centered at $W\in \bc$. 

Given $Z  = z_1 \e +  z_2\edag \in \bc$ we introduce the conjugation $Z^* =  \overline z_1 \e +  \overline  z_2\edag$, where $\overline z_1, \overline z_2$ are usual complex conjugates to $z_1, z_2 \in \C({\bf i})$.

Note that 
$$ZZ^*=Z^*Z=  |Z|^2_{\bf k}.$$

Set $Z = z_1\e +z_2 \edag $ and  $W = w_1\e +w_2 \edag \in \bc$, we have 
\begin{align*} \langle Z, W\rangle_{{\bf k}} & := \frac{1}{2}\left({Z}^* W +  {W}^* Z   \right) = \frac{1}{2}\left( Z {W}^*  + W {Z}^*  \right)  \\
& =\frac{1}{2} ( \overline{z}_1 w_1 + \overline{w}_1z_1 ) \e  + 
\frac{1}{2}( \overline{z}_2 w_2 + \overline{w}_2z_2 ) \edag \\
& = \langle z_1,  w_1\rangle_{\C({\bf i })}  \e  + 
\langle z_2,  w_2 \rangle_{\C({\bf i })}  \edag. \\
\end{align*} 

We now recall, in the same fashion as \cite{EMDA}, the basic definitions and results for the theory of $\mathbb{BC}$-holomorphic functions.
\begin{definition}
Let $\Omega\subset\mathbb{BC}$ be an open set. Fix $Z \in \Omega\subset\mathbb{BC}$ and let $F: \Omega\rightarrow\mathbb{BC}$. The limit
$$F'(Z):=\lim_{\Omega \ni W \to Z} \frac{F(W)-F(Z)}{W-Z},$$
when $W-Z$ is an invertible bicomplex number, if exists, is called the derivative of $F$ at the point $Z$.
\end{definition}
\begin{definition}
A function $F:\Omega\subset\mathbb{BC}\rightarrow\mathbb{BC}$ is said to be $\mathbb{BC}$-holomorphic in $\Omega$ if for every $Z\in \Omega$ the derivative $F'(Z)$ exists.
\end{definition}
By \cite[Theorem 7.6.4]{EMDA}, a bicomplex valued function $F = f_1\e + f_2\edag $ defined on a product-type domain, i.e. $\Omega=\Omega_1 \e + \Omega_2 \edag \subset \bc$, where $\Omega_1 , \Omega_2  \subset \C({\bf i})$ are domains, is $\bc$-holomorphic if and only if  
$$F(Z) = f_1(z_1)\e + f_2(z_2)\edag$$
at every $Z=z_1{\e} +z_2\edag \in \Omega$, where $z_l\in \Omega_l$ and  $f_l\in C^1(\Omega_l, \C({\bf i})) \cap Ker \dfrac{d}{d \overline z_l} $ for $l=1,2$.

We now follow \cite[Subsection 11.2]{EMDA} making the following assumptions: Let $\Omega$ be a domain in $\bc$, and let $\Lambda \in \Omega$ be a two-dimensional, simply connected, piecewise smooth surface with boundary $\gamma= \partial \Lambda \subset \Omega$. It is required that $\Lambda$ be parametrized by $R = R(u, v)$ with $R = R_1\e+R_2\edag$, being $R_1$ and $R_2$ parametrization, respectively, of simply connected domains $\Lambda_1$ and $\Lambda_2$ in $\C({\bf i})$. Furthermore, $\gamma$ is parametrized by the restriction of $R$ onto $\partial \Lambda$ given by $r = r_1\e + r_2\edag$, where $r_1$ and $r_2$ are the parametrizations of $\gamma_1 := \partial \Lambda_1$ and of $\gamma_2 := \partial \Lambda_2$, respectively. The curves $\gamma_1$ and $\gamma_2$ are require to be piecewise smooth, closed Jordan curves in $\C({\bf i})$. 

Let $F(Z) = f_1(z_1)\e+f_2(z_2)\edag$ a continuous bicomplex function on $\gamma$, where $f_l \in C^{1}(\Lambda_l, \C), l=1,2$, then a bicomplex integration is defined to be  
\begin{align*}
\int_{\gamma} F(Z) dZ := \e \int_{\gamma_1} f_1(z_1) d z_1 + \edag \int_{\gamma_2} f_2(z_2) d z_2, 
\end{align*} 
where $Z= z_1\e + z_2\edag $ and  $z_1,z_2\in \mathbb C({\bf i})$.

Similarly, consider
\begin{align*}
\int_{\Lambda} F(Z) dZ\wedge dZ^* :=
\left( \int_{\Lambda_1}f_1(z_1) d z_1 \wedge d\bar z_1
\right)\e   +\left( \int_{\Lambda_2} f_2(z_2) d z_2 \wedge d\overline z_2
\right)\edag. 
\end{align*}  
\begin{theorem}\label{BBPF}(Bicomplex Borel–Pompeiu formula) Let $F \in C^1(\Omega, \bc)$;  $F(Z) = f_1(z_1)\e+ f_2(z_2)\edag, \  Z = z_1\e+z_2\edag$, and let $\gamma$ and $\Lambda$ satisfy the above assumptions. Then for any $W \in \Lambda \setminus \gamma$, we have
\begin{align*}
F(W) = \frac{1}{2\pi{\bf i}} \int_{\gamma} \frac{F(Z)}{Z - W} dZ + \frac{1}{2\pi{\bf i}}\int_{\Lambda} \frac{\dfrac{\partial F}{\partial Z^*}}{Z - W}dZ \wedge dZ^*,
\end{align*}
where $Z = z_1\e + z_2\edag, \ z_1,z_2\in \mathbb C({\bf i})$ and
$\dfrac{\partial}{\partial Z^*} = \e \dfrac{\partial}{\partial \overline z_1} + \edag \dfrac{\partial}{\partial \overline z_2}.$
\end{theorem}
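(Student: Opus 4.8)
The plan is to pass to the idempotent decomposition, reduce the bicomplex identity to its two $\C(\li)$-components, and apply in each component the classical (complex) Borel--Pompeiu formula. Write $W = w_1\e + w_2\edag$ with $w_1,w_2\in\C(\li)$; since $W\in\Lambda\setminus\gamma$ and both $\Lambda$ and $\gamma$ are parametrized component-wise, one has $w_l\in\Lambda_l\setminus\gamma_l$ for $l=1,2$. The algebraic point is that whenever $z_l\neq w_l$ the number $Z-W = (z_1-w_1)\e + (z_2-w_2)\edag$ is invertible with $(Z-W)^{-1} = (z_1-w_1)^{-1}\e + (z_2-w_2)^{-1}\edag$. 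Hence, since products in the idempotent basis act component-wise and $\partial/\partial Z^* = \e\,\partial/\partial\overline z_1 + \edag\,\partial/\partial\overline z_2$ acts component-wise on $F = f_1\e + f_2\edag$,
$$\frac{F(Z)}{Z-W} = \frac{f_1(z_1)}{z_1-w_1}\,\e + \frac{f_2(z_2)}{z_2-w_2}\,\edag,\qquad \frac{1}{Z-W}\,\frac{\partial F}{\partial Z^*} = \frac{\partial f_1/\partial\overline z_1}{z_1-w_1}\,\e + \frac{\partial f_2/\partial\overline z_2}{z_2-w_2}\,\edag.$$

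I would then substitute these into the definitions of the bicomplex line and surface integrals recalled just above the theorem. Writing $\varepsilon_1:=\e$ and $\varepsilon_2:=\edag$, and using that the scalar $\tfrac{1}{2\pi\li}\in\C(\li)$ commutes with $\e,\edag$ and distributes over the idempotent decomposition, one obtains
$$\frac{1}{2\pi\li}\int_\gamma\frac{F(Z)}{Z-W}\,dZ = \sum_{l=1,2}\varepsilon_l\,\frac{1}{2\pi\li}\int_{\gamma_l}\frac{f_l(z_l)}{z_l-w_l}\,dz_l,$$
$$\frac{1}{2\pi\li}\int_\Lambda\frac{1}{Z-W}\,\frac{\partial F}{\partial Z^*}\,dZ\wedge dZ^* = \sum_{l=1,2}\varepsilon_l\,\frac{1}{2\pi\li}\int_{\Lambda_l}\frac{\partial f_l/\partial\overline z_l}{z_l-w_l}\,dz_l\wedge d\overline z_l.$$
For each fixed $l$, identifying $\C(\li)$ with $\C$, the data $f_l\in C^1(\overline{\Lambda_l},\C)$, $\gamma_l=\partial\Lambda_l$ a piecewise smooth closed Jordan curve and $w_l\in\Lambda_l\setminus\gamma_l$ are exactly the hypotheses of the classical Borel--Pompeiu formula, which — after rewriting the area term via $dz_l\wedge d\overline z_l = -2\li\,dx_l\wedge dy_l$ so that the constant $\tfrac{1}{2\pi\li}$ stands in front of it too — reads
$$f_l(w_l) = \frac{1}{2\pi\li}\int_{\gamma_l}\frac{f_l(z_l)}{z_l-w_l}\,dz_l + \frac{1}{2\pi\li}\int_{\Lambda_l}\frac{\partial f_l/\partial\overline z_l}{z_l-w_l}\,dz_l\wedge d\overline z_l.$$
Multiplying by $\varepsilon_l$ and summing over $l=1,2$, the left-hand side becomes $f_1(w_1)\e + f_2(w_2)\edag = F(W)$ and the right-hand side becomes the sum of the two integrals displayed above; this is precisely the claimed formula.

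The step I expect to require the most care is the convergence of the surface integral. On the curve $\gamma$ the kernel $(z_l-w_l)^{-1}$ stays bounded, but on $\Lambda$ the bicomplex kernel $(Z-W)^{-1}$ is singular along the entire set $\{z_1=w_1\}\cup\{z_2=w_2\}$, which is strictly larger than the single point $W$. This does not actually obstruct anything: the bicomplex surface integral is \emph{defined} component-wise, and in each $\C(\li)$-component the singularity $(z_l-w_l)^{-1}$ is locally integrable against planar Lebesgue measure — exactly the situation handled by the classical Cauchy--Pompeiu theorem — so both component integrals converge absolutely. For the same reason, the component splittings of $\int_\gamma$ and $\int_\Lambda$ used above need no separate justification: they are the very definitions borrowed from \cite{EMDA}. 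With these observations in place the argument reduces to the short computation sketched above.
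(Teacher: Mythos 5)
Your proposal is correct: the paper itself offers no proof of Theorem \ref{BBPF} (it is recalled verbatim from \cite[Subsection 11.2]{EMDA}), and your argument — split $W$, the kernel $(Z-W)^{-1}$, and both integrals into their $\e$, $\edag$ components, note that the bicomplex line and surface integrals are \emph{defined} component-wise so the zero-divisor singularities of the kernel reduce to the locally integrable singularity $(z_l-w_l)^{-1}$ in each plane, and then apply the classical Cauchy--Pompeiu formula in $\C(\li)$ for $l=1,2$ — is exactly the standard proof underlying that reference, with the normalization $dz_l\wedge d\overline z_l=-2\li\,dx_l\wedge dy_l$ handled correctly. The only point worth stating explicitly (which you flag only in passing) is that the parametrization hypotheses ($R_l$ maps the parameter domain onto $\overline{\Lambda_l}$ with $\partial$ onto $\gamma_l$) are what guarantee $w_l\in\Lambda_l\setminus\gamma_l$ when $W\in\Lambda\setminus\gamma$; with that noted, the proof is complete.
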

\begin{remark}
A discussion of bicomplex integration can be found in \cite [Section 2]{BPS} and the references given there. 
\end{remark}

\section{On the theory of bicomplex $(\vartheta,\varphi)-$weighted holomorphic functions}
Let $\vartheta (Z)= \vartheta_1(z_1) \e +  \vartheta_2(z_2)\edag $ and $ \varphi(Z) = \varphi_1(z_1)\e +\varphi_2(z_2)\edag$ 
of variable $Z=z_1\e +z_2\edag\in \bc$ such that $\vartheta_1, \vartheta_2, \varphi_1, \varphi_2\in  C^1(\C({\bf i}),\C({\bf i}))$. Then $\langle \vartheta, \varphi \rangle_{\bf k} \equiv 0$ on $\bc$ if and only if $\langle \vartheta_{\ell}, \varphi_{\ell} \rangle_{\C({\bf i })} \equiv 0$ on $\C({\bf i})$  for $\ell=1,2$. 

\begin{proposition}
Let $\vartheta_1 =p_{_{1,1}} + {\bf i} p_{_{1,2}} $, \  $\vartheta_2  =  p_{_{2,1}}  + {\bf i}p_{_{2,2}}$, \ $\varphi_1 = q_{_{1,1}}  + {\bf i} q_{_{1,2}}$ and $\varphi_2  =  q_{_{2,1}} + {\bf i} q_{_{2,2}}$. Then    
$\langle \vartheta  ,  \varphi  \rangle_{\bf k} = 0$ on  $\bc$ 
if and only if 
\begin{align*}
 (p_{_{1,2}} \e   + p_{_{2,2}} \edag)    \varphi 
= - {\bf i} \left(  (q_{_{1,1}} \e     +   q_{_{2,1}} \edag\right)  \vartheta  ,   
\textrm{on} \ \bc.
\end{align*}
\end{proposition}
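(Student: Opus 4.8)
The plan is to reduce the stated bicomplex identity to its two idempotent components and to verify each of them by a short real-variable computation, in the same spirit as the complex-valued case treated earlier.

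First I would use the equivalence recorded immediately before the proposition: since $\langle\vartheta,\varphi\rangle_{\bf k}=\langle\vartheta_1,\varphi_1\rangle_{\C({\bf i})}\,\e+\langle\vartheta_2,\varphi_2\rangle_{\C({\bf i})}\,\edag$ and $\{\e,\edag\}$ is a basis of $\bc$, the condition $\langle\vartheta,\varphi\rangle_{\bf k}\equiv 0$ on $\bc$ is equivalent to $\langle\vartheta_\ell,\varphi_\ell\rangle_{\C({\bf i})}\equiv 0$ on $\C({\bf i})$ for both $\ell=1$ and $\ell=2$.

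Next, fix $\ell\in\{1,2\}$. Writing $\vartheta_\ell=p_{_{\ell,1}}+{\bf i}p_{_{\ell,2}}$ and $\varphi_\ell=q_{_{\ell,1}}+{\bf i}q_{_{\ell,2}}$, a direct expansion of $\langle\vartheta_\ell,\varphi_\ell\rangle_{\C({\bf i})}=\tfrac12(\overline{\vartheta_\ell}\,\varphi_\ell+\overline{\varphi_\ell}\,\vartheta_\ell)$ yields $\langle\vartheta_\ell,\varphi_\ell\rangle_{\C({\bf i})}=p_{_{\ell,1}}q_{_{\ell,1}}+p_{_{\ell,2}}q_{_{\ell,2}}$. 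On the other hand, expanding $p_{_{\ell,2}}\varphi_\ell$ and $-{\bf i}q_{_{\ell,1}}\vartheta_\ell$ shows that their real parts coincide identically (both equal $p_{_{\ell,2}}q_{_{\ell,1}}$), whereas their imaginary parts are $p_{_{\ell,2}}q_{_{\ell,2}}$ and $-p_{_{\ell,1}}q_{_{\ell,1}}$, respectively. Hence the $\C({\bf i})$-valued identity $p_{_{\ell,2}}\varphi_\ell=-{\bf i}q_{_{\ell,1}}\vartheta_\ell$ holds on $\C({\bf i})$ exactly when $p_{_{\ell,1}}q_{_{\ell,1}}+p_{_{\ell,2}}q_{_{\ell,2}}\equiv 0$, i.e.\ exactly when $\langle\vartheta_\ell,\varphi_\ell\rangle_{\C({\bf i})}\equiv 0$.

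Finally I would reassemble the two scalar identities into one bicomplex identity: multiplying the $\ell=1$ relation by $\e$ and the $\ell=2$ relation by $\edag$, summing, and using $\e^2=\e$, $(\edag)^2=\edag$, $\e\edag=0$ together with the component-wise multiplication rule of the idempotent representation, one obtains $(p_{_{1,2}}\e+p_{_{2,2}}\edag)\varphi=-{\bf i}(q_{_{1,1}}\e+q_{_{2,1}}\edag)\vartheta$ on $\bc$; conversely, projecting this identity onto its $\e$- and $\edag$-parts returns the two scalar relations. Chaining the three equivalences proves the proposition. No serious obstacle is anticipated; the only point deserving a moment's care is that the real parts of $p_{_{\ell,2}}\varphi_\ell$ and $-{\bf i}q_{_{\ell,1}}\vartheta_\ell$ coincide identically, so that this $\C({\bf i})$-valued equation reduces to a single real scalar equation — precisely the orthogonality relation — rather than to two independent ones.
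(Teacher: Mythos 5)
Your proof is correct: the idempotent decomposition reduces the claim to the two component identities $p_{_{\ell,2}}\varphi_\ell=-{\bf i}\,q_{_{\ell,1}}\vartheta_\ell$ in $\C({\bf i})$, and your real/imaginary-part computation rightly shows each is equivalent to $p_{_{\ell,1}}q_{_{\ell,1}}+p_{_{\ell,2}}q_{_{\ell,2}}=0$, i.e.\ to $\langle\vartheta_\ell,\varphi_\ell\rangle_{\C({\bf i})}\equiv 0$. The paper states the proposition without an explicit proof, and your argument is precisely the direct computation it intends, using the component-wise equivalence $\langle\vartheta,\varphi\rangle_{\bf k}=\langle\vartheta_1,\varphi_1\rangle_{\C({\bf i})}\e+\langle\vartheta_2,\varphi_2\rangle_{\C({\bf i})}\edag$ recorded just before the statement.
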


\begin{definition}
Let a bicomplex variable $Z = z_1\e +z_2 \edag =  (x_1+ y_1{\bf  i })\e +(x_2+ y_2 {\bf i})\edag$ and define   
\begin{align*}
 \frac{\partial}{\partial Z_{\vartheta \varphi } } := &     (\vartheta_1 \frac{\partial  }{\partial {x_1} } +  \varphi_1  \frac{\partial  }{\partial {y_1} }) \e  +  ( \vartheta_2 \frac{\partial  }{\partial {x_2} } +  \varphi_2 \frac{\partial  }{\partial {y_2} }) \edag,
\end{align*}
which we can write explicitly as   
\begin{align*} 
\frac{\partial  }{\partial Z_{\vartheta  \varphi } }  =  & \left(  ( p_{_{1,1}}  \frac{\partial  }{\partial {x_1} } 
 + q_{_{1,1}}   \frac{\partial  }{\partial {y_1} } )
+ {\bf i} (  p_{_{1,2}}    \frac{\partial  }{\partial {x_1} } +
     q_{_{1,2}}  \frac{\partial  }{\partial {y_1} }) \right)  \e   \\ 
		 &  + 
	\left(    ( p_{_{2,1}}   \frac{\partial  }{\partial {x_2} }  + q_{_{2,1}}  \frac{\partial  }{\partial {y_2} })  + {\bf i}  (p_{_{2,2}}   \frac{\partial  }{\partial {x_2} }  + q_{_{2,2}}  \frac{\partial  }{\partial {y_2} })  \right) \edag.
\end{align*}
We call $\dfrac{\partial  }{\partial Z_{\vartheta \varphi } }$ the bicomplex $(\vartheta,\varphi)-$weighted Cauchy-Riemann operator.
\end{definition} 
 
For the following result the $(\vartheta,\varphi)-$ weighted Gauss theorem for bicomplex functions is expressed out.
\begin{theorem}\label{WeightedTBPBC} 
Let $\Omega= \Omega_1\e+\Omega_2 \edag \subset \bc$ such that  $\Omega_1,\Omega_2\subset \mathbb C({\bf i})$ are domains 
and set $F =f_1 \e + f_2 \edag$ with $f_l\in C^1(\Omega_l, \C) \cap  C(\overline \Omega_l, \C) $ for $l=1,2$.
Set $\Lambda \subset \Omega$ a surface with smooth boundary $\gamma= \partial \Lambda \subset \Omega$ according to Theorem \ref{BBPF}.  In \cite{BG} we see the following results: 
\begin{equation*}   
 \int_{\Lambda} \left(  \frac{\partial F }{\partial Z_{\vartheta  \varphi } } 
 +  A_{\vartheta  \varphi }  F  + B_{\vartheta  \varphi }  {\bf i} F  \right)
dZ \wedge dZ^{*}  = \int_{\gamma} F(Z) d\rho_{\vartheta  \varphi }(Z) ,
\end{equation*}
where   $d\rho_{\vartheta  \varphi }(Z) =  d\rho_{\vartheta }(z_1) \e + d\rho_{\varphi }(z_1) \edag$ and
\begin{align*}
A_{\vartheta  \varphi } =&   (\frac{\partial  p_{_{ 1,1}}  }{\partial  x_1} +\frac{\partial  q_{_{ 1,1}} }{\partial  y_1} ) \e  + (\frac{\partial  p_{_{ 2,1}}  }{\partial  x_2}+\frac{ \partial  q_{_{ 2,1}}}{\partial  y_2} ) \edag  
\\
B_{\vartheta  \varphi } = &  (
\frac{\partial  p_{_{ 1,2}} }{\partial  x_1} + \frac{\partial  q_{_{ 1,2}} }{\partial  y_1}   ) \e + 
 ( \frac{\partial  p_{_{ 2,2}}  }{\partial  x_2}  + \frac{\partial  q_{_{ 2,2}}  }{\partial  y_2}
) \edag  \end{align*}
\end{theorem}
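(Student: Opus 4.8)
The plan is to exploit the idempotent decomposition so that the bicomplex statement splits into two independent copies of the complex $\psi$-weighted Gauss theorem recalled in Subsection 2.3 (from \cite{ATV}), one in each component. Indeed $\Omega = \Omega_1\e + \Omega_2\edag$, $\Lambda = \Lambda_1\e + \Lambda_2\edag$, $\gamma = \gamma_1\e + \gamma_2\edag$ and $F = f_1\e + f_2\edag$ all split along $\{\e,\edag\}$, and since $\e\edag = 0$, $\e^2=\e$, $(\edag)^2=\edag$, every term of the claimed formula decomposes accordingly. First I would read off, from the explicit form of the operator in the Definition above, that the coefficient of $\e$ in $\frac{\partial F}{\partial Z_{\vartheta\varphi}}$ is $\big(p_{_{1,1}}\frac{\partial}{\partial x_1} + q_{_{1,1}}\frac{\partial}{\partial y_1}\big)f_1 + \li\big(p_{_{1,2}}\frac{\partial}{\partial x_1} + q_{_{1,2}}\frac{\partial}{\partial y_1}\big)f_1$, i.e. exactly $\mathcal{D}_{\psi}f_1$ for the complex weight pair $\psi = (\psi_0,\psi_1) = (\vartheta_1,\varphi_1)$; similarly the $\edag$-coefficient is $\mathcal{D}_{\psi}f_2$ with $\psi = (\vartheta_2,\varphi_2)$.

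Next I would verify that the hypotheses of the complex theorem hold componentwise. The orthogonality $\langle\vartheta,\varphi\rangle_{\ki}\equiv 0$ on $\bc$ is equivalent to $\langle\vartheta_\ell,\varphi_\ell\rangle_{\C(\li)}\equiv 0$ on $\C(\li)$ for $\ell=1,2$ --- this is the equivalence stated at the beginning of Section 3 and made explicit in the Proposition --- while the regularity $f_\ell,\vartheta_\ell,\varphi_\ell\in C^1(\Omega_\ell,\C)\cap C(\overline{\Omega_\ell},\C)$ and the fact that $\Lambda_\ell$ is a regular domain bounded by the piecewise smooth Jordan curve $\gamma_\ell$ are part of the standing assumptions. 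Writing $\vartheta_1 = p_{_{1,1}} + \li p_{_{1,2}}$ and $\varphi_1 = q_{_{1,1}} + \li q_{_{1,2}}$, the complex $\psi$-weighted Gauss theorem applied to $f_1$ on $\Lambda_1$ gives
\begin{equation*}
\int_{\Lambda_1}\left(\mathcal{D}_{\psi}f_1 + \left(\frac{\partial p_{_{1,1}}}{\partial x_1} + \frac{\partial q_{_{1,1}}}{\partial y_1}\right)f_1 + \left(\frac{\partial p_{_{1,2}}}{\partial x_1} + \frac{\partial q_{_{1,2}}}{\partial y_1}\right)\li f_1\right)dx_1\,dy_1 = \int_{\gamma_1} f_1\, d\rho_{\psi}(z_1),
\end{equation*}
and the bracketed coefficients of $f_1$ and $\li f_1$ are precisely the $\e$-components of $A_{\vartheta\varphi}$ and $B_{\vartheta\varphi}$, while $d\rho_{\psi}(z_1)$ is the $\e$-part of $d\rho_{\vartheta\varphi}$. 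The same identity holds in the $\edag$-component with every index $1$ replaced by $2$.

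Finally I would assemble the two componentwise identities: multiply the first by $\e$, the second by $\edag$, add them, and use $\e+\edag=1$ together with the componentwise definitions of $\int_\Lambda(\cdot)\,dZ\wedge dZ^*$ and of $\int_\gamma(\cdot)\,d\rho_{\vartheta\varphi}(Z)$. Passing between the bicomplex area element and the real one via $dz_\ell\wedge d\bar z_\ell = -2\li\,dx_\ell\,dy_\ell$ (with the normalization convention of \cite{BG} for the bicomplex surface integral) then reproduces the asserted formula. There is no deep analytic obstacle here --- the idempotent decomposition turns the bicomplex claim into two already-known complex ones; the only point demanding care is the bookkeeping of the dictionary $p_0\leftrightarrow p_{_{\ell,1}}$, $p_1\leftrightarrow p_{_{\ell,2}}$, $q_0\leftrightarrow q_{_{\ell,1}}$, $q_1\leftrightarrow q_{_{\ell,2}}$ between the two settings, and keeping the orientation/normalization of $dZ\wedge dZ^*$ consistent with the version of the complex theorem being invoked.
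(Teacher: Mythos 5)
Your proposal is correct and is essentially the intended argument: the paper itself gives no proof of Theorem \ref{WeightedTBPBC} (it is quoted from \cite{BG}), and the proof there is exactly your idempotent decomposition, i.e.\ reading off that the $\e$- and $\edag$-components of $\frac{\partial F}{\partial Z_{\vartheta\varphi}}$ are the complex $\psi$-weighted operators with weight pairs $(\vartheta_1,\varphi_1)$ and $(\vartheta_2,\varphi_2)$ and then applying the complex $\psi$-weighted Gauss theorem of \cite{ATV} in each component before reassembling with $\e+\edag=1$. The only point needing care is the one you already flag: the complex theorem is stated against $dx_\ell\,dy_\ell$ while the bicomplex integral uses $dZ\wedge dZ^*$, whose components are $dz_\ell\wedge d\bar z_\ell=-2\li\,dx_\ell\,dy_\ell$, so the constant must be matched to the convention of \cite{BG} (and note the statement's $d\rho_{\varphi}(z_1)$ in the $\edag$-slot should be the $(\vartheta_2,\varphi_2)$-weighted measure in $z_2$).
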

The following theorem reveals a bicomplex $(\vartheta,\varphi)-$Borel-Pompieu formula.
\begin{theorem}\label{weighted_Borel-Pompieu_Bicomplex} 
Under the same hypothesis of Theorem \ref{WeightedTBPBC} with $W\in \Lambda$ we get  
\begin{align*}  & F(W)c_{(\vartheta,\varphi)}  =
\int_{\gamma  }  
F (Z )E_{(\vartheta, \varphi)} (Z , W )d\rho_{(\vartheta, \varphi)}(Z ) 
 - \int_{\Lambda} 
E_{(\vartheta , \varphi)}(Z ,W ) \frac{\partial F }{\partial Z_{(\vartheta,\varphi)}}  dZ\wedge dZ^{*},
\end{align*}
where $c_{(\vartheta,\varphi)} := c_{\vartheta_1 \varphi_1 }  \e +  c_{\vartheta_2 \varphi_2}\edag,$ 
$E_{(\vartheta  , \varphi)} (Z,W) :=  E_{\vartheta_1\varphi_1 } (z_1 , w_1 ) \e  + E_{\vartheta_2\varphi_2 } (z_2 , w_2)\edag$ 
and $d\rho_{(\vartheta , \varphi)}(Z) :=  d\rho_{\vartheta_1\varphi_1 }(z_1 ) \e  + d\rho_{\vartheta_2 \varphi_2 }(z_2 ) \edag.$
\end{theorem}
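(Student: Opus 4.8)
\section*{Proof proposal for Theorem \ref{weighted_Borel-Pompieu_Bicomplex}}

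The plan is to reduce the bicomplex identity to two copies of the complex $\psi$-weighted Cauchy--Pompeiu formula of Theorem \ref{weighted Cauchy-Pompeiu}, one in each idempotent slot, and then recombine with the algebra of the basis $\{\e,\edag\}$. First I would write $F = f_1\e + f_2\edag$ and $W = w_1\e + w_2\edag$, and note that, by the very definition of $\dfrac{\partial}{\partial Z_{(\vartheta,\varphi)}}$, its $\e$-component acting on $F$ is $\vartheta_1 \dfrac{\partial f_1}{\partial x_1} + \varphi_1 \dfrac{\partial f_1}{\partial y_1}$ and its $\edag$-component is $\vartheta_2 \dfrac{\partial f_2}{\partial x_2} + \varphi_2 \dfrac{\partial f_2}{\partial y_2}$; that is, each component is precisely the complex $\psi$-weighted Cauchy--Riemann operator $\mathcal D_{\psi}$ of Subsection~2.3 with $\psi = (\vartheta_\ell,\varphi_\ell)$ applied to $f_\ell$. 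Since the standing hypothesis $\langle\vartheta,\varphi\rangle_{\bf k}\equiv 0$ on $\bc$ is equivalent to $\langle\vartheta_\ell,\varphi_\ell\rangle_{\C({\bf i})}\equiv 0$ on $\C({\bf i})$ for $\ell=1,2$, the pair $(\vartheta_\ell,\varphi_\ell)$ is an admissible weight in the sense of Theorem \ref{weighted Cauchy-Pompeiu} on the domain $\Lambda_\ell$ with boundary $\gamma_\ell$.

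Next, fix $W\in\Lambda\setminus\gamma$, so that $w_\ell\in\Lambda_\ell\setminus\gamma_\ell$ for $\ell=1,2$, and apply Theorem \ref{weighted Cauchy-Pompeiu} in each slot:
\[
f_\ell(w_\ell)\, c_{\vartheta_\ell\varphi_\ell} = \int_{\gamma_\ell} f_\ell(z_\ell)\, E_{\vartheta_\ell\varphi_\ell}(z_\ell,w_\ell)\, d\rho_{\vartheta_\ell\varphi_\ell}(z_\ell) - \int_{\Lambda_\ell} E_{\vartheta_\ell\varphi_\ell}(z_\ell,w_\ell)\,\bigl(\mathcal D_{(\vartheta_\ell,\varphi_\ell)} f_\ell\bigr)(z_\ell)\, dx_\ell\,dy_\ell .
\]
I would then multiply the $\ell=1$ identity by $\e$, the $\ell=2$ identity by $\edag$, and add. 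Using $\e^2=\e$, $(\edag)^2=\edag$ and $\e\edag=0$, the left-hand sides combine, by component-wise bicomplex multiplication, into $F(W)\,c_{(\vartheta,\varphi)}$ with $c_{(\vartheta,\varphi)} = c_{\vartheta_1\varphi_1}\e + c_{\vartheta_2\varphi_2}\edag$ as stated. For the right-hand sides I would invoke the component-wise definitions of bicomplex line integration and of $\int_\Lambda(\cdot)\,dZ\wedge dZ^{*}$ recalled just before Theorem \ref{BBPF}: the $\e$-slot and $\edag$-slot of $\int_\gamma G\, d\rho_{(\vartheta,\varphi)}$ are exactly $\int_{\gamma_1}g_1\,d\rho_{\vartheta_1\varphi_1}$ and $\int_{\gamma_2}g_2\,d\rho_{\vartheta_2\varphi_2}$, and similarly for the area integral after identifying $dZ\wedge dZ^{*}$ with $dz_\ell\wedge d\bar z_\ell$ in each slot. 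Matching $E_{(\vartheta,\varphi)}(Z,W)$, $d\rho_{(\vartheta,\varphi)}(Z)$ and $\dfrac{\partial F}{\partial Z_{(\vartheta,\varphi)}}$ slot by slot with the recombined sum then produces precisely the asserted bicomplex formula.

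The part that deserves care, rather than any new idea, is the bookkeeping of measures and orientations: one must verify that the bicomplex area element $dZ\wedge dZ^{*}$ projects in each idempotent component onto exactly the planar element $dx_\ell\,dy_\ell$ appearing in Theorem \ref{weighted Cauchy-Pompeiu} (up to a universal constant, which must be the \emph{same} in both slots for the recombination to stay clean), and that the parametrization $r = r_1\e + r_2\edag$ of $\gamma$ induces on $\gamma_1$ and $\gamma_2$ the orientations for which $d\rho_{(\vartheta,\varphi)}(Z) = d\rho_{\vartheta_1\varphi_1}(z_1)\e + d\rho_{\vartheta_2\varphi_2}(z_2)\edag$ holds with the correct signs. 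Once these identifications are settled, the idempotent algebra does the rest, and no analytic input beyond Theorem \ref{weighted Cauchy-Pompeiu} is required. As an alternative route I would mention that one can also argue directly in $\bc$, applying the $(\vartheta,\varphi)$-weighted Gauss theorem (Theorem \ref{WeightedTBPBC}) to the product $E_{(\vartheta,\varphi)}(\cdot,W)\,F$ on $\Lambda$ with a small bicomplex ball around $W$ removed and letting its radius $r\in\D^{+}$ tend to $0$; this mirrors the complex proof of Theorem \ref{weighted Cauchy-Pompeiu} and again splits into the two components, so the splitting argument above is the economical way to present it. Finally, I would note that the interior condition $W\in\Lambda\setminus\gamma$ is inherited from the complex statement, and that the hyperbolic constant $c_{(\vartheta,\varphi)}\in\D$ is the natural idempotent assembly of the two complex constants $c_{\vartheta_\ell\varphi_\ell}$.
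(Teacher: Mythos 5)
The paper itself offers no proof of this theorem: it is imported verbatim from \cite{BG} (the sentence ``In \cite{BG} we see the following results'' covers both Theorem \ref{WeightedTBPBC} and this statement), so there is no in-paper argument to compare against. Your idempotent-splitting reduction is precisely the natural route and, in substance, the way such bicomplex statements are obtained from their complex counterparts: for a product-type $F=f_1\e+f_2\edag$ the operator $\partial/\partial Z_{\vartheta\varphi}$ acts slot-wise as $\mathcal D_{(\vartheta_\ell,\varphi_\ell)}f_\ell$, the orthogonality $\langle\vartheta,\varphi\rangle_{\bf k}\equiv0$ splits into $\langle\vartheta_\ell,\varphi_\ell\rangle_{\C({\bf i})}\equiv0$, and the curve and surface integrals recombine componentwise, so multiplying the two complex Cauchy--Pompeiu identities by $\e$ and $\edag$ and adding is correct. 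Two points you flagged as ``bookkeeping'' deserve to be nailed down rather than deferred: (i) with the statements exactly as quoted here, the complex formula carries the area element $dx_\ell\,dy_\ell$ while the bicomplex one carries $dZ\wedge dZ^{*}$, and since $dz_\ell\wedge d\bar z_\ell=-2{\bf i}\,dx_\ell\wedge dy_\ell$ the recombination literally yields the formula with $\tfrac{{\bf i}}{2}\,dZ\wedge dZ^{*}$ in the area term, so a normalization convention of \cite{ATV}/\cite{BG} (or of the kernel $E_{\vartheta_\ell\varphi_\ell}$) must be invoked to remove that factor --- it is the same in both slots, as you say, but it is not $1$; (ii) Theorem \ref{weighted Cauchy-Pompeiu} as quoted takes $\psi_0,\psi_1\in\C$ (constants, as the formula for $c_\psi$ suggests), whereas the bicomplex hypotheses allow $C^1$ weights, so one must either cite the variable-weight version from \cite{ATV} or restrict the weights accordingly. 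Your alternative argument (weighted Gauss theorem on $\Lambda$ minus a small bicomplex ball around $W$, letting $r\in\D^{+}$ tend to $0$) is also viable but reduces to the same two complex computations, so the splitting presentation is indeed the economical one.
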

\subsection{Bicomplex $(\vartheta,\varphi)-$weighted $(\vec{\alpha},\sigma,\phi)-$Cauchy-Riemann type operator}
Set ${P}=(a_1, c_1,a_2, c_2) , \ Q=(b_1, d_1,b_2, d_2) \in \mathbb R^4$ such that  $a_l< b_l$  y $c_l < d_l$ for $l=1,2 $ and let 
\begin{align*}
J_P^Q & := \{ z_1\e + z_2 \edag \in \bc \ \mid  \ \Re z_\ell  \in [a_\ell  ,b_\ell ], \ \Im z_\ell \in [c_\ell,d_\ell ] , \  \ \ell=1,2  \} \\
 &= \left( [a_1,b_1]+ {\bf i} [c_1,d_1]\right) \e + \left( [a_2,b_2]+ {\bf i} [c_2,d_2]\right) \edag  
\end{align*}
Let  $\vec{\alpha}=(\alpha_0, \alpha_1,\alpha_2,\alpha_3)$ and $\vec{\sigma}= (\sigma_0, \sigma_1,\sigma_2,\sigma_3)$ vectors of $ (0,1)^4$. Define 
$\phi=\phi_1 \e + \phi_2 \edag$ a product type function in $C^1(J_P^Q, \mathbb D^+)$, where $\phi_\ell : [a_\ell,b_\ell]\times [c_\ell,d_\ell]   \to \mathbb R^+ $, $ \ell =1,2$ such that $\dfrac{\partial \phi  }{\partial x_\ell}, \dfrac{\partial\phi }{\partial y_\ell}> 0$ on $ [a_\ell,b_\ell]\times [c_\ell,d_\ell] $, if $z_\ell=(x_\ell + {\bf i}y_\ell )$ for $\ell=1,2$.  

Consider 
$$ D \phi = (\frac{\partial \phi }{\partial {x_1} } +     \frac{\partial \phi }{\partial {y_1} }) \e  +  ( \frac{\partial \phi }{\partial {x_2} } +   \frac{\partial  \phi}{\partial {y_2} }) \edag$$
on $J_P^Q$.
\begin{definition} \label{Def1} Let $W= w_1\e + w_2\edag \in J_P^Q$. We shall write $F: J_P^Q  \to \bc\in AC^1(J_P^Q , \bc)$ if $F$ is a product type function, $F(Z)=F(z_1\e+ z_2 \edag )= f_1(z_1)\e + f_2(z_2)\edag $ with $f_\ell: [a_\ell ,b_\ell ]+ {\bf i} [c_\ell,d_\ell ] \to \mathbb C({\bf i}), \ \ell =1,2$ for which the maps $x_\ell \to f_\ell (x_\ell + {\bf i} \Im w_\ell)$ and  $y_\ell \to f_\ell (\Re w_\ell + {\bf i} y_\ell)$ belong  respectively to $AC^1([a_\ell,b_\ell], \C({\bf i}))$ and to $AC^1( [c_\ell,d_\ell], \C({\bf i}))$ for $\ell=1,2$. 

Also define: 
\begin{align*}
I_{a^+}^{1-\vec{\alpha}, \vec {\sigma}, \phi} F(Z,W): =&\left(I_{a_{1}^{+}}^{1-\alpha_{0},\sigma_{0},\phi_{0}}f_{1}(x_{1}+\li \Im w_{1}) + I_{c_{1}^{+}}^{1-\alpha_{1}, \sigma_{1},\phi_{1}}f_{1}(\Re w_{1}+\li y_{1})\right)\e +\\
& \left(I_{a_{2}^{+}}^{1-\alpha_{2},\sigma_{2},\phi_{2}}f_{2}(x_{2}+\li \Im w_{2}) + I_{c_{2}^{+}}^{1-\alpha_{3}, \sigma_{3},\phi_{3}}f_{2}(\Re w_{2}+\li y_{2})\right)\edag ,\\ 
D_{a^+}^{1-\vec {\alpha},\vec{ \sigma}, \phi}  F(Z,W): =&\left(D_{a_{1}^{+}}^{1-\alpha_{0},\sigma_{0},\phi_{0}}f_{1}(x_{1}+\li \Im w_{1}) + D_{c_{1}^{+}}^{1-\alpha_{1}, \sigma_{1},\phi_{1}}f_{1}(\Re w_{1}+\li y_{1})\right)\e +\\
& \left(D_{a_{2}^{+}}^{1-\alpha_{2},\sigma_{2},\phi_{2}}f_{2}(x_{2}+\li \Im w_{2}) + D_{c_{2}^{+}}^{1-\alpha_{3}, \sigma_{3},\phi_{3}}f_{2}(\Re w_{2}+\li y_{2})\right)\edag ,\\
I_{b^-}^{1-\vec {\alpha},\vec{ \sigma}, \phi}  F(Z,W) :=&\left(I_{b_{1}^{-}}^{1-\alpha_{0},\sigma_{0},\phi_{0}}f_{1}(x_{1}+\li \Im w_{1}) + I_{d_{1}^{-}}^{1-\alpha_{1}, \sigma_{1},\phi_{1}}f_{1}(\Re w_{1}+\li y_{1})\right)\e +\\
&\left(I_{b_{2}^{-}}^{1-\alpha_{2},\sigma_{2},\phi_{2}}f_{2}(x_{2}+\li \Im w_{2}) + I_{d_{2}^{-}}^{1-\alpha_{3}, \sigma_{3},\phi_{3}}f_{2}(\Re w_{2}+\li y_{2})\right)\edag ,\\
D_{b^-}^{1-\vec {\alpha},\vec{ \sigma}, \phi}  F(Z,W) :=&\left(D_{b_{1}^{+}}^{1-\alpha_{0},\sigma_{0},\phi_{0}}f_{1}(x_{1}+\li \Im w_{1}) + D_{d_{1}^{-}}^{1-\alpha_{1}, \sigma_{1},\phi_{1}}f_{1}(\Re w_{1}+\li y_{1})\right)\e +\\
& \left(D_{b_{2}^{-}}^{1-\alpha_{2},\sigma_{2},\phi_{2}}f_{2}(x_{2}+\li \Im w_{2}) + D_{d_{2}^{-}}^{1-\alpha_{3}, \sigma_{3},\phi_{3}}f_{2}(\Re w_{2}+\li y_{2})\right)\edag \end{align*}
and
\begin{align*} 
& R^{\vec {\alpha},\vec{ \sigma}, \phi}  F(Z,W)  
 := \\
&  \left(    I_{c_{1}^{+}}^{1-\alpha_{1}, \sigma_{1},\phi_{1}}f_{1}(\Re w_{1}+\li y_{1})D_{a_{1}^{+}}^{1-\alpha_{0},\sigma_{0},\phi_{0}} [1] + I_{a_{1}^{+}}^{1-\alpha_{0},\sigma_{0},\phi_{0}}f_{1}(x_{1}+\li \Im w_{1})  D_{c_{1}^{+}}^{1-\alpha_{1}, \sigma_{1},\phi_{1}}[1]      
 \right)  \e \\
   &+\left(  I_{a_{2}^{+}}^{1-\alpha_{2},\sigma_{2},\phi_{2}}f_{2}(x_{2} + 
  +\li \Im w_{2})D_{c_{2}^{+}}^{1-\alpha_{3}, \sigma_{3},\phi_{3}}[1]     +     I_{c_{2}^{+}}^{1-\alpha_{3}, \sigma_{3},\phi_{3}}f_{2}(\Re w_{2}+\li y_{2})D_{a_{2}^{+}}^{1-\alpha_{2},\sigma_{2},\phi_{2}}[1]\right) \edag ,
\end{align*}
where $\phi_0 (t) := \phi ((t+{\bf i} \Im w_1)\e + w_2 \edag )$ for $t\in [a_1,b_1]$,  $\phi_1 (t) := \phi ((\Re w_1 +{\bf  i} t)\e + w_2 \edag )$ for $t\in [c_1,d_1]$, $\phi_2(t) := \phi ( w_1 \e  +  (t+{\bf i} \Im w_2) \edag )$ for $t\in [a_2,b_2]$ and $\phi_3 (t) := \phi ( w_1 \e  +  (\Re w_2+{\bf i} t) \edag )$ for $t\in [c_2,d_2]$.
\end{definition}

\begin{proposition}\label{FTBC}
Let $F: J_P^Q  \to \bc\in AC^1(J_P^Q , \bc)$. Then, we have 
\begin{align}\label{equa9} 
& D_{a^+}^{1-\vec \alpha,\vec  \sigma, \phi} \circ I_{a^+}^{1-\vec \alpha,\vec  \sigma, \phi}  F(Z,W)= \nonumber \\
 &\left( f_{1}(x_{1}+\li \Im w_{1}) +   f_{1}(\Re w_{1}+\li y_{1})       
 \right)  \e +  \left( f_{2}(x_{2}    +\li \Im w_{2}) +  f_{2}(\Re w_{2}+\li y_{2}) \right) \edag \nonumber \\
  &  +  R^{\vec  \alpha, \vec \sigma, \phi}  F(Z,W).
\end{align}
 \end{proposition}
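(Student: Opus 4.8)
The plan is to unfold the composition by two successive appeals to Definition~\ref{Def1} and one appeal to the one–variable fundamental theorem \eqref{TFProFracFunc}, organising the computation along the idempotent decomposition $\{\e,\edag\}$ and, inside each component, along the two one–dimensional slices through $W$: the horizontal slice $x_\ell\mapsto f_\ell(x_\ell+\li\,\Im w_\ell)$ and the vertical slice $y_\ell\mapsto f_\ell(\Re w_\ell+\li\,y_\ell)$. First I would set $G:=I_{a^+}^{1-\vec\alpha,\vec\sigma,\phi}F(\cdot,W)$ and read off from Definition~\ref{Def1} that its $\e$–component is a sum $A_1+B_1$, where $A_1=I_{a_1^+}^{1-\alpha_0,\sigma_0,\phi_0}f_1(x_1+\li\,\Im w_1)$ depends only on $x_1=\Re z_1$ and $B_1=I_{c_1^+}^{1-\alpha_1,\sigma_1,\phi_1}f_1(\Re w_1+\li\,y_1)$ depends only on $y_1=\Im z_1$, and likewise for the $\edag$–component with the indices $2,3$. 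Because $0<\Re(1-\alpha_\ell)<1$ for every $\ell$, these are proportional Riemann–Liouville integrals of the order to which \eqref{TFProFracFunc} applies (here $n=[\Re(1-\alpha_\ell)]+1=1$), and under the $AC^1$ hypothesis on $F$ together with the positivity and $C^1$–regularity of $\phi$, each slice of $G$ is again of the class prescribed in Definition~\ref{Def1}, so that $D_{a^+}^{1-\vec\alpha,\vec\sigma,\phi}$ may be applied to $G$ by the very same definition.

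Next I would apply $D_{a^+}^{1-\vec\alpha,\vec\sigma,\phi}$ to $G$ and expand, using the $\R$–linearity of each scalar proportional Riemann–Liouville derivative ${_{a}}D^{\alpha,\sigma,\phi}$ (immediate from \eqref{equa11} and the linearity of $D^{\sigma,\phi}$ and of the proportional integrals). On the horizontal slice $B_1$ is constant in $x_1$, so the first summand of the $\e$–component of $D_{a^+}^{1-\vec\alpha,\vec\sigma,\phi}G$ splits as $D_{a_1^+}^{1-\alpha_0,\sigma_0,\phi_0}A_1+B_1\,D_{a_1^+}^{1-\alpha_0,\sigma_0,\phi_0}[1]$; by \eqref{TFProFracFunc} the first term is $D_{a_1^+}^{1-\alpha_0,\sigma_0,\phi_0}\circ I_{a_1^+}^{1-\alpha_0,\sigma_0,\phi_0}f_1(x_1+\li\,\Im w_1)=f_1(x_1+\li\,\Im w_1)$. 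Symmetrically, on the vertical slice $A_1$ is constant in $y_1$ and the second summand becomes $A_1\,D_{c_1^+}^{1-\alpha_1,\sigma_1,\phi_1}[1]+f_1(\Re w_1+\li\,y_1)$. Adding the two slice contributions, the $\e$–component of $D_{a^+}^{1-\vec\alpha,\vec\sigma,\phi}\circ I_{a^+}^{1-\vec\alpha,\vec\sigma,\phi}F(Z,W)$ equals $f_1(x_1+\li\,\Im w_1)+f_1(\Re w_1+\li\,y_1)$ plus the two surviving ``cross'' terms $B_1\,D_{a_1^+}^{1-\alpha_0,\sigma_0,\phi_0}[1]$ and $A_1\,D_{c_1^+}^{1-\alpha_1,\sigma_1,\phi_1}[1]$, which is precisely the $\e$–component of $R^{\vec\alpha,\vec\sigma,\phi}F(Z,W)$ from Definition~\ref{Def1}. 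Running the identical argument on the $\edag$–component with the indices $2,3$ and the variables $x_2,y_2$, and recombining the two idempotent pieces, yields \eqref{equa9}.

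The routine ingredients are the two linear expansions and the two invocations of \eqref{TFProFracFunc}. The point I would be most careful about — and which I regard as the main obstacle — is the bookkeeping of which variable is \emph{active} and which is \emph{frozen} in each of the four scalar terms produced by the composition, so that exactly the two cross terms that do not collapse under \eqref{TFProFracFunc} are matched with the four summands that define $R^{\vec\alpha,\vec\sigma,\phi}F(Z,W)$, together with the ancillary check that the $AC^1$/positivity hypotheses are strong enough for every one–variable operator appearing here to be well defined on the \emph{closed} slices (in particular $D_{a_\ell^+}^{1-\alpha,\sigma,\phi}[1]$ and $D_{c_\ell^+}^{1-\alpha,\sigma,\phi}[1]$, which involve a first–order proportional derivative of a proportional integral of the constant $1$), so that each use of linearity and of the fundamental theorem is legitimate.
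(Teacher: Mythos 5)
Your proposal is correct and follows essentially the same route as the paper: the paper's proof is just the one-line remark that the identity follows by a direct calculation using \eqref{TFProFracFunc}, and your argument is precisely that calculation carried out explicitly (idempotent components, linearity in the active variable with the other variable frozen, the fundamental theorem on each slice, and the two surviving cross terms assembling into $R^{\vec\alpha,\vec\sigma,\phi}F(Z,W)$). No discrepancy with the paper's approach, and your bookkeeping of active versus frozen variables matches the way $R^{\vec\alpha,\vec\sigma,\phi}$ is defined.
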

\begin{proof} The proof is based on a direct calculation using \eqref{TFProFracFunc}.
\end{proof}
To shorten notation we continue to write $\sigma$ for $(\sigma_0 +  {\bf i} \sigma_1)\e +  (\sigma_2 +  {\bf i} \sigma_3)\edag$.

Now, the bicomplex $(\vartheta,\varphi)-$weighted $(\vec{\alpha},\sigma,\phi)-$Cauchy-Riemann type operator can be introduced
\begin{definition} Let $W= w_1\e + w_2\edag \in J_{P}^Q$ fixed and  let $F\in AC^1(J_P^Q , \bc)$. The right bicomplex $(\vartheta,\varphi)-$weighted $(\vec{\alpha},\sigma,\phi)-$Cauchy-Riemann type operator, i.e., a   bicomplex weighted proportional fractional, of order $\vec \alpha$ and proportion $\sigma$ Cauchy-Riemann type operator associated to $\phi$  with weight $(\vartheta,\varphi)$ is defined by 
 \begin{align*}
 \frac{\partial ^{
\vec  \alpha,\sigma,\phi}F(Z,W)}{\partial Z_{\vartheta \varphi, a^+ } } := & (1-\sigma)  (I_{a^+}^{1-\vec \alpha,\vec  \sigma, \phi}  F)(Z,W) +  \sigma \dfrac{\displaystyle \frac{\partial}{\partial Z_{\vartheta \varphi } }   ( I_{a^+}^{1-\vec \alpha,\vec  \sigma, \phi} F)
(Z,W)   }{D \phi (Z)}.
 \end{align*}
Meanwhile, The left bicomplex $(\vartheta,\varphi)-$weighted $(\vec{\alpha},\sigma,\phi)-$Cauchy-Riemann type operator is 
\begin{align*}
 \frac{\partial ^{
\vec \alpha,\sigma,\phi}F(Z,W)}{\partial Z_{\vartheta \varphi, b^- }}:= & (1-\sigma)  (I_{b^-}^{1-\vec \alpha, \vec \sigma, \phi} F)(Z,W) +
   \sigma \dfrac{\displaystyle \frac{\partial}{\partial Z_{\vartheta \varphi } }   ( I_{b^-}^{1-\vec \alpha,\vec \sigma, \phi} F)
(Z,W) }{D \phi (Z)}.
\end{align*}
\end{definition}
Note that the previous  fractional   operators    preserve the  structure of that given in \eqref{equa11} for $n=1$.
\begin{proposition} Let $W= w_1\e + w_2\edag \in J_{P}^Q$ fixed and  let $F\in AC^1(J_P^Q , \bc)$. Let  $\lambda_\ell: [a_\ell ,b_\ell]\times [c_\ell,d_\ell]   \to \mathbb R $ for  $\ell=1,2$ be  such that 
\begin{align*}
(\vartheta_1 \frac{\partial \lambda_1 }{\partial {x_1} } +  \varphi_1  \frac{\partial  \lambda_1 }{\partial {y_1} }) \e  +  ( \vartheta_2 \frac{\partial  \lambda_2 }{\partial {x_2} } +  \varphi_2 \frac{\partial  \lambda_2 }{\partial {y_2} }) \edag  
=    (  D \phi (Z) ) \sigma^{-1} (1-\sigma) .
\end{align*}
Denote $\lambda_l(x_l,y_l)= \lambda(z_l)$ for $l=1,2$. Then
\begin{align}\label{equa10}
 \frac{\partial ^{
 \vec \alpha,\sigma,\phi}F(Z,W)}{\partial Z_{\vartheta \varphi, a^+ } } = &
   ( e^{ -  \lambda_1 } \e +
e^{  - \lambda_2 } \edag )  (D \phi (Z) )^{-1} \sigma  \frac{\partial}{\partial Z_{\vartheta \varphi } } \left[   ( e^{   \lambda_1 } \e +
e^{   \lambda_2 } \edag )      ( I_{a^+}^{1-\vec \alpha, \vec \sigma, \phi} F)
(Z,W)         \right] ,   
\end{align}  
\begin{align*}
 \frac{\partial ^{
 \vec \alpha,\sigma,\phi}F(Z,W)}{\partial Z_{\vartheta \varphi, b^- } } = &
   ( e^{ -  \lambda_1 } \e +
e^{  - \lambda_2 } \edag )  ( D \phi (Z) )^{-1} \sigma  \frac{\partial}{\partial Z_{\vartheta \varphi } } \left[   ( e^{   \lambda_1 } \e +
e^{   \lambda_2 } \edag )      ( I_{b^-}^{1-\vec \alpha, \vec \sigma, \phi}  F)
(Z,W)         \right] .   
\end{align*}
\end{proposition}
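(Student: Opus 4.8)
The plan is to verify \eqref{equa10} componentwise in the idempotent basis; the whole content is the ordinary Leibniz rule for the first-order operator $\frac{\partial}{\partial Z_{\vartheta\varphi}}$ together with the defining relation imposed on $\lambda_1,\lambda_2$. Write $G := I_{a^+}^{1-\vec\alpha,\vec\sigma,\phi}F = g_1\e + g_2\edag$, where, for the fixed point $W$, each $g_\ell$ is a function of $z_\ell = x_\ell + \li y_\ell$ on $[a_\ell,b_\ell]\times[c_\ell,d_\ell]$ (indeed, by Definition \ref{Def1}, a sum of a function of $x_\ell$ and a function of $y_\ell$); the differentiability of $G$ used below is precisely the one already presupposed when $\frac{\partial}{\partial Z_{\vartheta\varphi}}(I_{a^+}^{1-\vec\alpha,\vec\sigma,\phi}F)$ is written in the definition of $\frac{\partial^{\vec\alpha,\sigma,\phi}}{\partial Z_{\vartheta\varphi,a^+}}$. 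Before starting I record the invertibility facts that make the right-hand side of \eqref{equa10} meaningful: since $\phi\in C^1(J_P^Q,\D^+)$ has strictly positive partials, each idempotent component of $D\phi(Z)$ is a strictly positive real number; $e^{\lambda_\ell}>0$; and, writing $\sigma = (\sigma_0+\li\sigma_1)\e + (\sigma_2+\li\sigma_3)\edag$ with all $\sigma_j\in(0,1)$, each component of $\sigma$ is a nonzero element of $\C(\li)$. Hence $e^{\pm\lambda_\ell}$, $D\phi(Z)^{\pm1}$, $\sigma^{\pm1}$ and $1-\sigma$ are genuine bicomplex numbers whose idempotent components multiply commutatively, and $\e+\edag=1$.

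The first step is to expand $\frac{\partial}{\partial Z_{\vartheta\varphi}}\bigl[(e^{\lambda_1}\e + e^{\lambda_2}\edag)G\bigr]$. Because $e^{\lambda_\ell}$ is real-valued it commutes with $\vartheta_\ell$ and $\varphi_\ell$, so in the $\e$-slot the Leibniz rule gives
$$\vartheta_1\frac{\partial(e^{\lambda_1}g_1)}{\partial x_1} + \varphi_1\frac{\partial(e^{\lambda_1}g_1)}{\partial y_1}
= e^{\lambda_1}\Bigl[\Bigl(\vartheta_1\frac{\partial\lambda_1}{\partial x_1} + \varphi_1\frac{\partial\lambda_1}{\partial y_1}\Bigr)g_1 + \vartheta_1\frac{\partial g_1}{\partial x_1} + \varphi_1\frac{\partial g_1}{\partial y_1}\Bigr],$$
and likewise in the $\edag$-slot. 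By the hypothesis on $\lambda_1,\lambda_2$, the coefficient of $g_\ell$ here is the $\e$- (resp. $\edag$-) component of $D\phi(Z)\,\sigma^{-1}(1-\sigma)$, while the remaining terms assemble into $\frac{\partial}{\partial Z_{\vartheta\varphi}}G$. Therefore
$$\frac{\partial}{\partial Z_{\vartheta\varphi}}\bigl[(e^{\lambda_1}\e + e^{\lambda_2}\edag)G\bigr]
= (e^{\lambda_1}\e + e^{\lambda_2}\edag)\Bigl[D\phi(Z)\,\sigma^{-1}(1-\sigma)\,G + \frac{\partial}{\partial Z_{\vartheta\varphi}}G\Bigr].$$

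The second step is to multiply this on the left by $(e^{-\lambda_1}\e + e^{-\lambda_2}\edag)\,D\phi(Z)^{-1}\sigma$. The two exponential factors cancel to $\e + \edag = 1$; inside the bracket, commutativity within each idempotent component gives $D\phi(Z)^{-1}\sigma\cdot D\phi(Z)\,\sigma^{-1}(1-\sigma) = 1-\sigma$ on the first term and leaves $\sigma\,D\phi(Z)^{-1}$ multiplying the second. The outcome is
$$(1-\sigma)\,G + \sigma\,\frac{\dfrac{\partial}{\partial Z_{\vartheta\varphi}}G}{D\phi(Z)},$$
which is exactly $\frac{\partial^{\vec\alpha,\sigma,\phi}F(Z,W)}{\partial Z_{\vartheta\varphi,a^+}}$ by the definition of that operator, the quotient by $D\phi(Z)$ being read componentwise as multiplication by its inverse. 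This proves \eqref{equa10}. The second identity is obtained by the identical computation with $I_{b^-}^{1-\vec\alpha,\vec\sigma,\phi}F$ in place of $I_{a^+}^{1-\vec\alpha,\vec\sigma,\phi}F$ throughout.

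I do not anticipate a genuine obstacle: the argument is a bookkeeping exercise built on the componentwise Leibniz rule and the defining relation for $\lambda_\ell$. The two points that deserve care are keeping the order of the commuting scalar factors $e^{\pm\lambda_\ell}$, $D\phi(Z)^{\pm1}$, $\sigma^{\pm1}$ straight inside each idempotent slot, and justifying every inversion — which is exactly where the standing assumptions (namely $\phi\in C^1(J_P^Q,\D^+)$ with positive partials, $\lambda_\ell$ real-valued, and $\vec\sigma\in(0,1)^4$) are used.
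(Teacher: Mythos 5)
Your proposal is correct and follows essentially the same route as the paper's own proof: the key relation $\frac{\partial}{\partial Z_{\vartheta\varphi}}(e^{\lambda_1}\e+e^{\lambda_2}\edag)=(D\phi(Z))\sigma^{-1}(1-\sigma)(e^{\lambda_1}\e+e^{\lambda_2}\edag)$ coming from the hypothesis on $\lambda_1,\lambda_2$, the Leibniz rule applied to $(e^{\lambda_1}\e+e^{\lambda_2}\edag)(I_{a^+}^{1-\vec\alpha,\vec\sigma,\phi}F)(Z,W)$, and multiplication by the inverse factors to recover the definition of $\frac{\partial^{\vec\alpha,\sigma,\phi}}{\partial Z_{\vartheta\varphi,a^+}}$. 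Your explicit componentwise verification and the remarks on invertibility of $D\phi(Z)$, $\sigma$ and the exponential weights only make explicit what the paper leaves implicit.
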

\begin{proof}
The proof starts with the observation that 
\begin{align*}
& (  D \phi (Z)  ) \sigma^{-1}  \frac{\partial ^{
\vec  \alpha,\sigma,\phi}F(Z,W)}{\partial Z_{\vartheta \varphi, a^+ } }= \\
&  ( D \phi (Z) ) \sigma^{-1} (1-\sigma)  (I_{a^+}^{1-\vec \alpha,\vec  \sigma, \phi} F)(Z,W) +    \frac{\partial}{\partial Z_{\vartheta \varphi } }   ( I_{a^+}^{1-\vec  \alpha,\vec  \sigma, \phi}  F)
(Z,W)    
 \end{align*}
and that
\begin{align*}
 \frac{\partial  ( e^{\lambda_1 } \e + e^{\lambda_2} \edag ) }{\partial Z_{\vartheta \varphi}} =  &   \left[(\vartheta_1 \frac{\partial \lambda_1 }{\partial {x_1} } +  \varphi_1  \frac{\partial  \lambda_1 }{\partial {y_1} }) \e  +  ( \vartheta_2 \frac{\partial  \lambda_2 }{\partial {x_2} } +  \varphi_2 \frac{\partial  \lambda_2 }{\partial {y_2} }) \edag\right]   ( e^{   \lambda_1 } \e +
e^{   \lambda_2 } \edag )  \\ 
= &   ( D \phi (Z)  ) \sigma^{-1} (1-\sigma)  ( e^{   \lambda_1 } \e +
e^{   \lambda_2 } \edag ) .
\end{align*}
Then
\begin{align*}
& \frac{\partial \left[( e^{\lambda_1 } \e + e^{\lambda_2} \edag)(I_{a^+}^{1-\vec \alpha,\vec  \sigma, \phi} F)
(Z,W)\right]}{\partial Z_{\vartheta \varphi }} =\\
& \left[\frac{\partial (e^{\lambda_1 } \e + e^{\lambda_2 } \edag )}{\partial Z_{\vartheta \varphi}} \right] (I_{a^+}^{1-\vec  \alpha,\vec  \sigma, \phi}  F)(Z,W) +  (e^{\lambda_1 } \e + e^{\lambda_2 } \edag ) \frac{\partial \left[( I_{a^+}^{1-\vec \alpha,\vec  \sigma, \phi}  F)(Z,W) \right]}{\partial Z_{\vartheta \varphi}}   =\\
&( e^{   \lambda_1 } \e + e^{   \lambda_2 } \edag )  \left\{   ( D \phi (Z)  ) \sigma^{-1} (1-\sigma)   ( I_{a^+}^{1-\vec \alpha,\vec  \sigma, \phi} F)(Z,W) + \frac{\partial \left[   ( I_{a^+}^{1-\vec \alpha,\vec  \sigma, \phi} F)
(Z,W)\right]}{\partial Z_{\vartheta \varphi}} \right\} =\\
& ( e^{   \lambda_1 } \e + e^{   \lambda_2 } \edag )  (  D \phi (Z)  ) \sigma^{-1}  \frac{\partial ^{\vec  \alpha,\sigma,\phi}F(Z,W)}{\partial Z_{\vartheta \varphi, a^+}}.   
\end{align*}
Therefore, 
\begin{align*}
 & \frac{\partial ^{
\vec  \alpha,\sigma,\phi}F(Z,W)}{\partial Z_{\vartheta \varphi, a^+ } } = (e^{-\lambda_1 } \e + e^{  - \lambda_2 } \edag )  (D \phi (Z) )^{-1} \sigma  \frac{\partial \left[   ( e^{   \lambda_1 } \e + e^{   \lambda_2 } \edag) ( I_{a^+}^{1-\vec \alpha,\vec  \sigma, \phi} F) (Z,W) \right]}{\partial Z_{\vartheta \varphi}}.   
\end{align*}
The second identity may be handled in much the same way.
\end{proof}
\section{Bicomplex $(\vartheta,\varphi)-$weighted $(\vec{\alpha},\sigma,\phi)-$Borel-Pompieu formula} 
We first prove a $(\vartheta,\varphi)-$weighted $(\vec{\alpha},\sigma,\phi)-$Gauss theorem for bicomplex functions
\begin{theorem} \label{weighted_Gauss_Bicomplex2}
Let $W= w_1\e + w_2\edag \in J_P^Q$ fixed and let $F\in AC^1( J_P^Q , \bc) $ be such that 
$$( e^{   \lambda_1 } \e + e^{   \lambda_2 } \edag )      ( I_{a^+}^{1-\vec \alpha, \vec \sigma , \phi}  F)
(\cdot,W)    \in C^1(J_P^Q , \bc) \cap  C(\overline{J_P^Q}, \bc).$$
Let $\Lambda \subset J_P^Q$ a surface with boundary $\gamma \subset J_P^Q$ under the assumptions of Theorem \ref{WeightedTBPBC}. Then 
\begin{align*}   
& \int_{\gamma}   ( e^{   \lambda_1 } \e +
e^{   \lambda_2 } \edag )      ( I_{a^+}^{1-\vec \alpha,\vec  \sigma, \phi}  F)
(Z,W)       d\rho_{\vartheta  \varphi }(Z) 
=  \int_{\Lambda} \left[    ( e^{   \lambda_1 } \e +
e^{    \lambda_2 } \edag )  ( D \phi (Z) ) \sigma^{-1}
\frac{\partial ^{
\vec  \alpha,\sigma,\phi}F(Z,W)}{\partial Z_{\vartheta \varphi, a^+ } } 
 \right. \\
&  \left. +  A_{\vartheta  \varphi }   ( e^{   \lambda_1 } \e +
e^{   \lambda_2 } \edag )      ( I_{a^+}^{1-\vec \alpha, \vec \sigma, \phi}  F)
(Z,W)         + B_{\vartheta  \varphi }  {\bf i}   ( e^{   \lambda_1 } \e +
e^{   \lambda_2 } \edag )      ( I_{a^+}^{1-\vec \alpha,\vec  \sigma, \phi}  F)
(Z,W)         \right]
dZ \wedge dZ^{*} .
\end{align*}
\end{theorem}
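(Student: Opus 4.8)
The plan is to obtain the asserted formula as a direct consequence of the $(\vartheta,\varphi)-$weighted Gauss theorem for bicomplex functions (Theorem \ref{WeightedTBPBC}), applied to the auxiliary function
\[
G(Z) := \left( e^{\lambda_1}\e + e^{\lambda_2}\edag\right)\left( I_{a^+}^{1-\vec\alpha,\vec\sigma,\phi} F\right)(Z,W),
\]
where $\lambda_1,\lambda_2$ are the functions for which \eqref{equa10} holds. First I would verify that $G$ meets the hypotheses of Theorem \ref{WeightedTBPBC}. By Definition \ref{Def1}, with $W$ fixed, the $\e$-component of $( I_{a^+}^{1-\vec\alpha,\vec\sigma,\phi} F)(\cdot,W)$ is the sum $I_{a_1^+}^{1-\alpha_0,\sigma_0,\phi_0}f_1(x_1+\li\Im w_1) + I_{c_1^+}^{1-\alpha_1,\sigma_1,\phi_1}f_1(\Re w_1+\li y_1)$, the first summand depending only on $x_1$ and the second only on $y_1$; hence this component is a genuine function of $z_1 = x_1+\li y_1$, and likewise the $\edag$-component is a function of $z_2$ alone. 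Multiplication by $e^{\lambda_1}\e + e^{\lambda_2}\edag$, with $\lambda_\ell = \lambda_\ell(x_\ell,y_\ell)$, preserves this product-type structure, so $G = g_1\e + g_2\edag$ with $g_\ell$ a function of $z_\ell$; and the standing hypothesis $( e^{\lambda_1}\e+e^{\lambda_2}\edag)( I_{a^+}^{1-\vec\alpha,\vec\sigma,\phi}F)(\cdot,W)\in C^1(J_P^Q,\bc)\cap C(\overline{J_P^Q},\bc)$ translates into $g_\ell\in C^1(\Omega_\ell,\C)\cap C(\overline{\Omega_\ell},\C)$, where $\Omega_\ell$ is the interior of the $\ell$-th slice of $J_P^Q$ and $\Lambda\subset\Omega := \Omega_1\e + \Omega_2\edag$.

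Next I would invoke Theorem \ref{WeightedTBPBC} with this $G$ and the surface $\Lambda$, which gives
\[
\int_{\Lambda}\left( \frac{\partial G}{\partial Z_{\vartheta\varphi}} + A_{\vartheta\varphi}\,G + B_{\vartheta\varphi}\,{\bf i}\,G\right)dZ\wedge dZ^{*} = \int_{\gamma} G(Z)\,d\rho_{\vartheta\varphi}(Z).
\]
It then remains to express $\partial G/\partial Z_{\vartheta\varphi}$ through the fractional Cauchy--Riemann-type operator. This is precisely \eqref{equa10} read in reverse: since $D\phi(Z)\in\D^{+}$ (all the partials $\partial\phi/\partial x_\ell,\partial\phi/\partial y_\ell$ being strictly positive) and $\sigma = (\sigma_0+{\bf i}\sigma_1)\e + (\sigma_2+{\bf i}\sigma_3)\edag$ has nonvanishing idempotent components, both $D\phi(Z)$ and $\sigma$ are invertible in $\bc$, and \eqref{equa10} is equivalent to
\[
\frac{\partial G}{\partial Z_{\vartheta\varphi}} = \left( e^{\lambda_1}\e + e^{\lambda_2}\edag\right)( D\phi(Z) )\,\sigma^{-1}\,\frac{\partial^{\vec\alpha,\sigma,\phi}F(Z,W)}{\partial Z_{\vartheta\varphi,a^+}},
\]
using that all the scalar hyperbolic/bicomplex factors $e^{\pm\lambda_\ell}$, $D\phi(Z)$, $\sigma^{\pm 1}$ commute (the algebra $\bc$ being commutative) and cancel in conjugate pairs. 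Substituting this into the previous display and restoring $G(Z) = ( e^{\lambda_1}\e+e^{\lambda_2}\edag)( I_{a^+}^{1-\vec\alpha,\vec\sigma,\phi}F)(Z,W)$ in the $A_{\vartheta\varphi}$, $B_{\vartheta\varphi}$ and boundary terms yields exactly the claimed identity.

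I do not expect a genuine obstacle here: the one point needing care is the first step, namely checking that $G$ legitimately enters Theorem \ref{WeightedTBPBC} --- that the two one-variable contributions in each idempotent component of $I_{a^+}^{1-\vec\alpha,\vec\sigma,\phi}F$ really assemble into a function of a single bicomplex-slice variable, and that the regularity class carries over. Everything else is the algebraic manipulation \eqref{equa10} already proved above, combined with the fact that the hypotheses have been tailored so that the Gauss theorem applies verbatim to $G$; no new analytic estimate is required.
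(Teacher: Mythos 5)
Your proposal is correct and follows essentially the same route as the paper: apply Theorem \ref{WeightedTBPBC} to the auxiliary function $(e^{\lambda_1}\e+e^{\lambda_2}\edag)(I_{a^+}^{1-\vec\alpha,\vec\sigma,\phi}F)(\cdot,W)$ and then rewrite its $(\vartheta,\varphi)$-weighted derivative via \eqref{equa10}. The extra checks you carry out (product-type structure of the integrated function, regularity transfer, invertibility of $D\phi(Z)$ and $\sigma$) are sound and simply make explicit what the paper leaves tacit.
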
 
\begin{proof}
By Theorem \ref{WeightedTBPBC} and considering the function $Z\mapsto (e^{\lambda_1 } \e + e^{\lambda_2 } \edag ) ( I_{a^+}^{1-\vec \alpha, \vec \sigma, \phi}  F)(Z,W)$ we see that 
\begin{align*}   
& \int_{\gamma}   ( e^{   \lambda_1(z_1) } \e +
e^{   \lambda_2 (z_2)} \edag )      ( I_{a^+}^{1-\vec \alpha, \vec \sigma, \phi}  F)
(Z,W)       d\rho_{\vartheta  \varphi }(Z) = \\
& \int_{\Lambda} \left\{  \frac{\partial         }{\partial Z_{\vartheta  \varphi } }\left[( e^{   \lambda_1(z_1) } \e +
e^{   \lambda_2(z_2) } \edag )      ( I_{a^+}^{1-\vec \alpha, \vec \sigma, \phi} \right]   
 \right. + \\
&  A_{\vartheta  \varphi }   ( e^{   \lambda_1(z_1) } \e +
e^{   \lambda_2(z_2) } \edag )      ( I_{a^+}^{1-\vec \alpha, \vec \sigma, \phi}  F)
(Z,W)  + B_{\vartheta  \varphi }  {\bf i}   ( e^{   \lambda_1(z_1) } \e + \\
& \left. e^{   \lambda_2(z_2) } \edag )   ( I_{a^+}^{1-\vec \alpha, \vec \sigma, \phi}  F)
(Z,W) \right\} dZ \wedge dZ^{*} .
\end{align*}
This establishes the formula combined with \eqref{equa10}.
\end{proof}
We are thus led to a $(\vartheta,\varphi)-$weighted $(\vec{\alpha},\sigma,\phi)-$Cauchy type formula for bicomplex functions
\begin{corollary}
Let $F\in AC^1 (J_P^Q, \bc) $ be as above. It is required that 
$$\dfrac{\partial ^{\vec \alpha,\sigma,\phi}F(Z,W)}{\partial Z_{\vartheta \varphi, a^+}}= 0$$ 
for all  $Z\in J_P^Q$. Then 
\begin{align*}   
\int_{\gamma} ( e^{\lambda_1 } \e + e^{\lambda_2 } \edag ) ( I_{a^+}^{1-\vec \alpha, \vec \sigma, \phi}  F)(Z,W) d\rho_{\vartheta  \varphi }(Z) 
= \int_{\Lambda} \left[   A_{\vartheta  \varphi }   ( e^{   \lambda_1 } \e + e^{\lambda_2 } \edag ) ( I_{a^+}^{1-\vec \alpha, \vec  \sigma, \phi}  F)(Z,W) \right. \\
+ B_{\vartheta  \varphi }  {\bf i}   ( e^{   \lambda_1 } \e + e^{\lambda_2 } \edag )( I_{a^+}^{1-\vec  \alpha, \vec \sigma, \phi} F)
(Z,W)] dZ \wedge dZ^{*}.
\end{align*}
\end{corollary}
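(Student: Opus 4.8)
The plan is to obtain this as an immediate specialization of Theorem \ref{weighted_Gauss_Bicomplex2}. First I would record that the blanket hypothesis ``$F$ as above'' supplies exactly the regularity needed to invoke that theorem: the product type function $Z\mapsto (e^{\lambda_1}\e+e^{\lambda_2}\edag)(I_{a^+}^{1-\vec\alpha,\vec\sigma,\phi}F)(Z,W)$ lies in $C^1(J_P^Q,\bc)\cap C(\overline{J_P^Q},\bc)$ for the fixed $W=w_1\e+w_2\edag\in J_P^Q$, and $\Lambda\subset J_P^Q$ with $\gamma=\partial\Lambda$ meets the surface assumptions. Hence Theorem \ref{weighted_Gauss_Bicomplex2} applies and rewrites the boundary integral $\int_\gamma (e^{\lambda_1}\e+e^{\lambda_2}\edag)(I_{a^+}^{1-\vec\alpha,\vec\sigma,\phi}F)(Z,W)\,d\rho_{\vartheta\varphi}(Z)$ as a sum of three area integrals over $\Lambda$.

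Second, I would feed in the standing assumption $\partial^{\vec\alpha,\sigma,\phi}F(Z,W)/\partial Z_{\vartheta\varphi,a^+}=0$ on $J_P^Q$. In the right-hand side produced by Theorem \ref{weighted_Gauss_Bicomplex2} the first area integrand is $(e^{\lambda_1}\e+e^{\lambda_2}\edag)(D\phi(Z))\sigma^{-1}\,\partial^{\vec\alpha,\sigma,\phi}F(Z,W)/\partial Z_{\vartheta\varphi,a^+}$; since multiplication is carried out componentwise in the idempotent basis and $D\phi(Z)\in\mathbb{D}^{+}$ is invertible, this product is identically $0$ on $\Lambda$, so the first area integral vanishes. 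The remaining two area integrals are precisely the ones carrying the factors $A_{\vartheta\varphi}$ and $B_{\vartheta\varphi}{\bf i}$, which is the claimed identity.

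The argument is thus pure bookkeeping; the only point deserving a word of care is that the vanishing of the bicomplex quantity $\partial^{\vec\alpha,\sigma,\phi}F/\partial Z_{\vartheta\varphi,a^+}$ on $J_P^Q$ forces both of its idempotent components to vanish there, so that the product-type structure is respected and the offending term genuinely drops out over all of $\Lambda$ and not merely on a slice. I do not expect any real obstacle: the substantive analytic work --- the $(\vartheta,\varphi)$-weighted Gauss theorem together with the identity \eqref{equa10} relating $\partial^{\vec\alpha,\sigma,\phi}F/\partial Z_{\vartheta\varphi,a^+}$ to an ordinary $(\vartheta,\varphi)$-weighted Cauchy-Riemann derivative of the weighted fractional integral --- has already been carried out in Theorem \ref{weighted_Gauss_Bicomplex2}.
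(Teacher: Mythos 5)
Your proposal is correct and follows exactly the route the paper intends: the corollary is the immediate specialization of Theorem \ref{weighted_Gauss_Bicomplex2} in which the hypothesis $\partial^{\vec\alpha,\sigma,\phi}F(Z,W)/\partial Z_{\vartheta\varphi,a^+}=0$ kills the first area integrand (componentwise in the idempotent basis), leaving only the $A_{\vartheta\varphi}$ and $B_{\vartheta\varphi}{\bf i}$ terms.
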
 

\begin{theorem}\label{weighted_Borel-Pompeiu_Bicomplex} 
Under the same hypotheses, for $W\in \Lambda$ fixed and $F\in AC^1 (J_P^Q, \bc)$ such that 
$$(e^{\lambda_1 } \e + e^{   \lambda_2 } \edag )( I_{a^+}^{1-\vec \alpha,\vec  \sigma, \phi}  F)(\cdot,W)\in C^1(J_P^Q , \bc) \cap  C(\overline{J_P^Q}, \bc).$$
Then 
\begin{align*} 
  & \left( f_{1}(x_{1}+\li \Im w_{1}) +   f_{1}(\Re w_{1}+\li y_{1})       
 \right)  \e +  \left( f_{2}(x_{2}    +\li \Im w_{2}) +  f_{2}(\Re w_{2}+\li y_{2}) \right) \edag =\\
& \int_{\gamma  }  
 \mathcal  E^{\vec \alpha,\vec  \sigma, \phi}_{(\vartheta, \varphi)} (V , Z )
   ( I_{a^+}^{1-\vec \alpha, \vec \sigma, \phi}  F)
(V,W)      d\rho_{(\vartheta, \varphi)}(V ) \  - \  R^{\vec \alpha, \vec \sigma, \phi}F(Z,W) \ - \\
& D_{a^+}^{1-\vec \alpha,\vec  \sigma, \phi}  \int_{\Lambda} ( e^{   \lambda_1(v_1)-   \lambda_1(z_1) } \e +
e^{   \lambda_2 (v_2) -  \lambda_2(z_2)  } \edag )
E_{(\vartheta , \varphi)}(V ,Z )   (  D \phi (V) )  \\
& \hspace{2cm} \sigma^{-1}   
 \frac{\partial ^{
\vec  \alpha,\sigma,\phi}F(V,W)}{\partial V_{\vartheta \varphi, a^+ } }
dV\wedge dV^{*},
\end{align*}
where 
$$\mathcal  E^{\vec \alpha, \vec \sigma, \phi}_{(\vartheta, \varphi)} (V , Z ):=
D_{a^+}^{1-\vec \alpha,\vec  \sigma, \phi}\left[    ( e^{   \lambda_1 (v_1)  - \lambda_1 (z_1)   } \e +
e^{\lambda_2 (v_2)-\lambda_2 (z_2)} \edag) E_{(\vartheta, \varphi)} (V , Z )\right]$$
and the parameters of operator $D_{a^+}^{1-\vec \alpha,\vec  \sigma, \phi}$ are the real components of $Z$.
\end{theorem}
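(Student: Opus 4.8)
The plan is to reduce the statement to the ordinary bicomplex $(\vartheta,\varphi)$-weighted Borel--Pompieu formula of Theorem~\ref{weighted_Borel-Pompieu_Bicomplex} applied not to $F$ itself but to a suitable auxiliary function, and then to ``undo'' the fractional integral $I_{a^+}^{1-\vec\alpha,\vec\sigma,\phi}$ by means of the fractional fundamental theorem of Proposition~\ref{FTBC}. Fix $W=w_1\e+w_2\edag\in\Lambda$ and introduce the product-type function
\[
G(V):=\bigl(e^{\lambda_1(v_1)}\e+e^{\lambda_2(v_2)}\edag\bigr)\,\bigl(I_{a^+}^{1-\vec\alpha,\vec\sigma,\phi}F\bigr)(V,W),\qquad V=v_1\e+v_2\edag .
\]
By hypothesis $G\in C^1(J_P^Q,\bc)\cap C(\overline{J_P^Q},\bc)$, so Theorem~\ref{weighted_Borel-Pompieu_Bicomplex} applies to $G$ on $\Lambda$ with boundary $\gamma$; evaluating at a point $Z\in\Lambda$ and renaming the integration variable $V$, it expresses $G(Z)\,c_{(\vartheta,\varphi)}$ as $\int_\gamma G(V)E_{(\vartheta,\varphi)}(V,Z)\,d\rho_{(\vartheta,\varphi)}(V)$ minus $\int_\Lambda E_{(\vartheta,\varphi)}(V,Z)\,\frac{\partial G}{\partial V_{\vartheta\varphi}}(V)\,dV\wedge dV^{*}$.

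I would then multiply this identity by the $V$-independent factor $e^{-\lambda_1(z_1)}\e+e^{-\lambda_2(z_2)}\edag$ and move it inside both integrals, which is legitimate since $\bc$ is commutative. On the left it yields $\bigl(I_{a^+}^{1-\vec\alpha,\vec\sigma,\phi}F\bigr)(Z,W)$ (up to the constant $c_{(\vartheta,\varphi)}$, which I absorb into the kernel $E_{(\vartheta,\varphi)}$), while inside the integrals the exponentials merge into $e^{\lambda_\ell(v_\ell)-\lambda_\ell(z_\ell)}$. Abbreviating $K(V,Z):=\bigl(e^{\lambda_1(v_1)-\lambda_1(z_1)}\e+e^{\lambda_2(v_2)-\lambda_2(z_2)}\edag\bigr)E_{(\vartheta,\varphi)}(V,Z)$ and invoking identity~\eqref{equa10} in the form
\[
\frac{\partial G}{\partial V_{\vartheta\varphi}}(V)=\bigl(e^{\lambda_1(v_1)}\e+e^{\lambda_2(v_2)}\edag\bigr)(D\phi(V))\sigma^{-1}\frac{\partial^{\vec\alpha,\sigma,\phi}F(V,W)}{\partial V_{\vartheta\varphi,a^+}},
\]
I obtain the integral representation
\[
\bigl(I_{a^+}^{1-\vec\alpha,\vec\sigma,\phi}F\bigr)(Z,W)=\int_\gamma K(V,Z)\bigl(I_{a^+}^{1-\vec\alpha,\vec\sigma,\phi}F\bigr)(V,W)\,d\rho_{(\vartheta,\varphi)}(V)-\int_\Lambda K(V,Z)(D\phi(V))\sigma^{-1}\frac{\partial^{\vec\alpha,\sigma,\phi}F(V,W)}{\partial V_{\vartheta\varphi,a^+}}\,dV\wedge dV^{*}.
\]

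Finally I would apply the operator $D_{a^+}^{1-\vec\alpha,\vec\sigma,\phi}$, whose parameters are the real components of $Z$, to both sides of this representation. On the left, Proposition~\ref{FTBC} yields exactly $\bigl(f_1(x_1+\li\Im w_1)+f_1(\Re w_1+\li y_1)\bigr)\e+\bigl(f_2(x_2+\li\Im w_2)+f_2(\Re w_2+\li y_2)\bigr)\edag+R^{\vec\alpha,\vec\sigma,\phi}F(Z,W)$. On the right, I would interchange $D_{a^+}^{1-\vec\alpha,\vec\sigma,\phi}$ (which acts on the variables of $Z$) with the integral over $\gamma$ (taken in $V$); since this operator is linear in the $Z$-variables and $\bigl(I_{a^+}^{1-\vec\alpha,\vec\sigma,\phi}F\bigr)(V,W)$ does not depend on $Z$, the boundary term becomes $\int_\gamma D_{a^+}^{1-\vec\alpha,\vec\sigma,\phi}[K(V,Z)]\bigl(I_{a^+}^{1-\vec\alpha,\vec\sigma,\phi}F\bigr)(V,W)\,d\rho_{(\vartheta,\varphi)}(V)$, which equals $\int_\gamma\mathcal E^{\vec\alpha,\vec\sigma,\phi}_{(\vartheta,\varphi)}(V,Z)\bigl(I_{a^+}^{1-\vec\alpha,\vec\sigma,\phi}F\bigr)(V,W)\,d\rho_{(\vartheta,\varphi)}(V)$ by the very definition of $\mathcal E^{\vec\alpha,\vec\sigma,\phi}_{(\vartheta,\varphi)}$, while the surface term is kept in the form $D_{a^+}^{1-\vec\alpha,\vec\sigma,\phi}\int_\Lambda K(V,Z)(D\phi(V))\sigma^{-1}\frac{\partial^{\vec\alpha,\sigma,\phi}F(V,W)}{\partial V_{\vartheta\varphi,a^+}}\,dV\wedge dV^{*}$. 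Transposing $R^{\vec\alpha,\vec\sigma,\phi}F(Z,W)$ to the right-hand side then produces precisely the asserted identity.

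The algebraic part is routine, everything splitting componentwise along $\e$ and $\edag$ with $\bc$ commutative. The main obstacle is the rigorous justification of the two interchanges of $D_{a^+}^{1-\vec\alpha,\vec\sigma,\phi}$ with the $\gamma$- and $\Lambda$-integrals: this operator is itself a composition of a proportional fractional integral with a first-order proportional derivative acting on the variables of $Z$, while the integrations run over $V$, so one needs a Fubini-type argument for its integral component together with differentiation under the integral sign (dominated convergence) for its derivative component, using the $AC^1$ and $C^1(\overline{J_P^Q})$ hypotheses, the positivity and smoothness of $\phi$ with $D\phi>0$, and the local integrability of the weights $(t-\tau)^{\alpha-1}$ and of the kernel $E_{(\vartheta,\varphi)}$. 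One should also check that $G$ genuinely satisfies the hypotheses of Theorem~\ref{weighted_Borel-Pompieu_Bicomplex} and keep track of the constant $c_{(\vartheta,\varphi)}$ throughout.
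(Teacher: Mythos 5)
Your proposal follows essentially the same route as the paper's own proof: apply the ordinary bicomplex $(\vartheta,\varphi)$-weighted Borel--Pompieu formula to the auxiliary function $(e^{\lambda_1}\e+e^{\lambda_2}\edag)(I_{a^+}^{1-\vec\alpha,\vec\sigma,\phi}F)(\cdot,W)$, rewrite the area integrand via \eqref{equa10}, strip off the exponential factor, and then act with $D_{a^+}^{1-\vec\alpha,\vec\sigma,\phi}$ using Proposition \ref{FTBC} and the Leibniz rule, exactly as the paper does. The only divergence is cosmetic: you absorb the constant $c_{(\vartheta,\varphi)}$ into the kernel while the paper silently lets it disappear, and your explicit flagging of the justification needed to interchange $D_{a^+}^{1-\vec\alpha,\vec\sigma,\phi}$ with the $\gamma$- and $\Lambda$-integrals is, if anything, more careful than the paper's bare appeal to the Leibniz rule.
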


\begin{proof} Application of Theorem \ref{weighted_Borel-Pompieu_Bicomplex} to the function $(e^{\lambda_1 } \e +
e^{\lambda_2 }\edag) (I_{a^+}^{1-\vec \alpha, \vec \sigma, \phi} F)(\cdot ,W)$ yields
\begin{align*} &  ( e^{   \lambda_1(z_1) } \e +
e^{   \lambda_2(z_2) } \edag )      ( I_{a^+}^{1-\vec \alpha, \vec \sigma, \phi}  F)
(Z,W)   c_{(\vartheta,\varphi)} \\
 = &
\int_{\gamma  }  
    ( e^{   \lambda_1 (v_1)} \e +
e^{   \lambda_2 (v_2)} \edag )      ( I_{a^+}^{1-\vec \alpha,\vec  \sigma, \phi}  F)
(V,W)   E_{(\vartheta, \varphi)} (V , Z ) d\rho_{(\vartheta, \varphi)}(V ) 
 \\
  & - \int_{\Lambda} 
E_{(\vartheta , \varphi)}(V ,Z) \frac{\partial \left[(e^{\lambda_1(v_1) } \e +
e^{   \lambda_2(v_2) } \edag) (I_{a^+}^{1-\vec \alpha,\vec  \sigma, \phi}  F)
(V,W)    \right]}{\partial V_{(\vartheta,\varphi)}} dV\wedge dV^{*},
\end{align*}
where $V=v_1\e + v_2 \edag$.  Then equation \eqref{equa10} allows us to obtain that 
\begin{align*}&   ( e^{   \lambda_1(z_1) } \e +
e^{   \lambda_2(z_2) } \edag )      ( I_{a^+}^{1-\vec \alpha, \vec \sigma, \phi}  F)
(Z,W)   c_{(\vartheta,\varphi)}\\
  = &
\int_{\gamma  }  
    ( e^{   \lambda_1 (v_1)} \e +
e^{   \lambda_2 (v_2)} \edag )      ( I_{a^+}^{1-\vec \alpha, \vec \sigma, \phi}  F)
(V,W)   E_{(\vartheta, \varphi)} (V , Z ) d\rho_{(\vartheta, \varphi)}(V ) 
 \\
  & - \int_{\Lambda} 
E_{(\vartheta , \varphi)}(V ,Z ) ( e^{   \lambda_1(v_1) } \e +
e^{   \lambda_2 (v_2)} \edag )  (  D \phi (V) )  \sigma^{-1}   
 \frac{\partial ^{
\vec  \alpha,\sigma,\phi}F(V,W)}{\partial V_{\vartheta \varphi, a^+ } }
dV\wedge dV^{*}.
\end{align*}
Therefore
\begin{align*}
& ( I_{a^+}^{1-\vec \alpha, \vec \sigma, \phi}  F)(Z,W) c_{(\vartheta,\varphi)}\\
= & \int_{\gamma}(e^{   \lambda_1 (v_1)  -   \lambda_1 (z_1)   } \e +
e^{   \lambda_2 (v_2)-   \lambda_2 (z_2)  } \edag )        E_{(\vartheta, \varphi)} (V , Z )   ( I_{a^+}^{1-\vec \alpha, \vec \sigma, \phi}  F)
(V,W) d\rho_{(\vartheta, \varphi)}(V) \\
  & - \int_{\Lambda} ( e^{   \lambda_1(v_1)-  \lambda_1(z_1) } \e +
e^{   \lambda_2 (v_2) -  \lambda_2(z_2)  } \edag )
E_{(\vartheta , \varphi)}(V ,Z )   (  D \phi (V) )  \sigma^{-1}   
 \frac{\partial ^{
\vec  \alpha,\sigma,\phi}F(V,W)}{\partial V_{\vartheta \varphi, a^+ } }
dV\wedge dV^{*}.
\end{align*}
Acting operator $D_{a^+}^{1-\vec \alpha,\vec  \sigma, \phi}$ on both sides,  using \eqref{equa9} and Leibniz rule we obtain that   
\begin{align*} 
  & \left( f_{1}(x_{1}+\li \Im w_{1}) +   f_{1}(\Re w_{1}+\li y_{1})       
 \right)  \e +  \left( f_{2}(x_{2}    +\li \Im w_{2}) +  f_{2}(\Re w_{2}+\li y_{2}) \right) \edag\\
  = &
\int_{\gamma  }  
D_{a^+}^{1-\vec \alpha, \vec \sigma, \phi}\left[    ( e^{   \lambda_1 (v_1)  -   \lambda_1 (z_1)   } \e +
e^{   \lambda_2 (v_2)-   \lambda_2 (z_2)  } \edag )        E_{(\vartheta, \varphi)} (V , Z )\right] 
   ( I_{a^+}^{1-\vec \alpha, \vec \sigma, \phi}  F)
(V,W)      d\rho_{(\vartheta, \varphi)}(V ) 
 \\
  & -  D_{a^+}^{1-\vec \alpha,\vec  \sigma, \phi}  \int_{\Lambda} ( e^{   \lambda_1(v_1)-  \lambda_1(z_1) } \e +
e^{   \lambda_2 (v_2) -  \lambda_2(z_2)  } \edag )
E_{(\vartheta , \varphi)}(V ,Z )   (  D \phi (V) )  \sigma^{-1}   
 \frac{\partial ^{
\vec  \alpha,\sigma,\phi}F(V,W)}{\partial V_{\vartheta \varphi, a^+ } }
dV\wedge dV^{*}\\
& - R^{\vec \alpha,\vec  \sigma, \phi}  F(Z,W) .
\end{align*}
\end{proof}
 
\begin{corollary} Under the hypothesis and notation of the previous theorem.  
 If $$  \frac{\partial ^{
 \vec \alpha,\sigma,\phi}f(V,W)}{\partial V_{\vartheta \varphi, a^+ } } =0 ,\quad  V\in J_P^Q.$$ Then 
\begin{align*} 
  & \left( f_{1}(x_{1}+\li \Im w_{1}) +   f_{1}(\Re w_{1}+\li y_{1})       
 \right)  \e +  \left( f_{2}(x_{2}    +\li \Im w_{2}) +  f_{2}(\Re w_{2}+\li y_{2}) \right) \edag\\
  = & \int_{\gamma} \mathcal  E^{\vec \alpha, \vec \sigma, \phi}_{(\vartheta, \varphi)} (V , Z )( I_{a^+}^{1-\vec \alpha, \vec \sigma, \phi}  F)
(V,W)\ d\rho_{(\vartheta, \varphi)}(V ) - R^{\vec \alpha,\vec  \sigma, \phi} F(Z,W).
\end{align*} 
\end{corollary}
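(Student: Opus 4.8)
The plan is to obtain the statement as a direct specialization of Theorem~\ref{weighted_Borel-Pompeiu_Bicomplex}. Under precisely the hypotheses and notation assumed here, that theorem represents the $\bc$-valued quantity
\[
\left( f_{1}(x_{1}+\li \Im w_{1}) +   f_{1}(\Re w_{1}+\li y_{1}) \right)  \e +  \left( f_{2}(x_{2}+\li \Im w_{2}) +  f_{2}(\Re w_{2}+\li y_{2}) \right) \edag
\]
as the sum of three contributions: the boundary integral $\int_{\gamma} \mathcal E^{\vec \alpha,\vec  \sigma, \phi}_{(\vartheta, \varphi)}(V,Z)\,(I_{a^+}^{1-\vec \alpha, \vec \sigma, \phi}F)(V,W)\,d\rho_{(\vartheta,\varphi)}(V)$, the correction term $-\,R^{\vec \alpha, \vec \sigma, \phi}F(Z,W)$, and a solid integral over $\Lambda$ to which the operator $D_{a^+}^{1-\vec \alpha,\vec  \sigma, \phi}$ (with parameters the real components of $Z$) is then applied. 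So the first step is simply to invoke that theorem, which is licensed by the standing regularity requirement $(e^{\lambda_1}\e + e^{\lambda_2}\edag)(I_{a^+}^{1-\vec\alpha,\vec\sigma,\phi}F)(\cdot,W)\in C^1(J_P^Q,\bc)\cap C(\overline{J_P^Q},\bc)$ carried over from the hypotheses.

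Next I would look at the integrand of the solid integral, namely
\[
( e^{   \lambda_1(v_1)-   \lambda_1(z_1) } \e + e^{   \lambda_2 (v_2) -  \lambda_2(z_2)  } \edag )\, E_{(\vartheta , \varphi)}(V ,Z )\, (  D \phi (V) )\, \sigma^{-1}\, \frac{\partial ^{\vec  \alpha,\sigma,\phi}F(V,W)}{\partial V_{\vartheta \varphi, a^+ } },
\]
and feed in the hypothesis of the corollary, which is exactly that the last factor $\frac{\partial ^{\vec  \alpha,\sigma,\phi}F(V,W)}{\partial V_{\vartheta \varphi, a^+ }}$ is the zero bicomplex number for every $V\in J_P^Q$. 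Since this factor enters the product linearly, the integrand vanishes identically on $\Lambda$ — component-wise on each idempotent factor $\e$ and $\edag$ — so, by the component-wise definition of bicomplex integration, the whole $\Lambda$-integral is $0$, regardless of the remaining bounded continuous factors $e^{\lambda_\ell(v_\ell)-\lambda_\ell(z_\ell)}$, $E_{(\vartheta,\varphi)}(V,Z)$, $D\phi(V)$ and $\sigma^{-1}$. Applying the linear fractional operator $D_{a^+}^{1-\vec \alpha,\vec  \sigma, \phi}$ to this zero function again returns $0$, so the entire last term on the right-hand side of Theorem~\ref{weighted_Borel-Pompeiu_Bicomplex} drops out, and what remains is exactly $\int_{\gamma} \mathcal  E^{\vec \alpha, \vec \sigma, \phi}_{(\vartheta, \varphi)} (V , Z )\,( I_{a^+}^{1-\vec \alpha, \vec \sigma, \phi}  F)(V,W)\, d\rho_{(\vartheta, \varphi)}(V ) - R^{\vec \alpha,\vec  \sigma, \phi} F(Z,W)$, i.e.\ the asserted identity.

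I do not expect any genuine obstacle here; the result is a routine corollary. The only points deserving a line of comment are the two bookkeeping observations already mentioned: that the regularity condition on $(e^{\lambda_1}\e + e^{\lambda_2}\edag)(I_{a^+}^{1-\vec\alpha,\vec\sigma,\phi}F)(\cdot,W)$ is what permits the use of Theorem~\ref{weighted_Borel-Pompeiu_Bicomplex}, and that vanishing of a $\bc$-valued integrand separately on its $\e$- and $\edag$-components forces the whole bicomplex solid integral — and hence its image under $D_{a^+}^{1-\vec \alpha,\vec  \sigma, \phi}$ — to vanish. (Alternatively, one could reprove the identity from scratch by repeating the argument of Theorem~\ref{weighted_Borel-Pompeiu_Bicomplex} with the vanishing derivative inserted from the outset, which suppresses the $\Lambda$-integral before the operator $D_{a^+}^{1-\vec\alpha,\vec\sigma,\phi}$ is ever applied; the specialization route above is shorter.)
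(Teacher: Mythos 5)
Your argument is correct: with the fractional derivative identically zero the $\Lambda$-integrand vanishes, so the solid integral and its image under $D_{a^+}^{1-\vec \alpha,\vec \sigma, \phi}$ drop out of Theorem \ref{weighted_Borel-Pompeiu_Bicomplex}, leaving exactly the stated identity. The paper offers no separate proof for this corollary precisely because this immediate specialization is the intended argument, so your route matches it.
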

\section*{Concluding remarks}
In conclusion, the following facts hold:
\begin{enumerate}
\item For $\sigma =1$ and  $\phi(Z)= \phi(z_1\e + z_2\edag)=x_1+ y_1+x_2+y_2  $ on $J_P^Q$ we see that $\displaystyle \frac{\partial ^{
\vec  \alpha,\sigma,\phi}}{\partial Z_{\vartheta \varphi, a^+ } }$ coincides with 
 the bicomplex weighted  fractional Cauchy-Riemann type operator $\displaystyle\frac{\partial ^{
\vec  \alpha}}{\partial Z_{\vartheta \varphi, a^+ } }$ studied in \cite{BG} and Theorems \ref{weighted_Gauss_Bicomplex2}  and  \ref{weighted_Borel-Pompeiu_Bicomplex} extend, preserving the  structure, Theorems  4.1. and  4.3. respectively of  \cite{BG}.

\item Given $0<\delta_0,\delta_1,\delta_2,\delta_3<1$. If   $ \alpha_0= \alpha_1=\alpha_2=\alpha_3=1$, $\sigma =1$ and  $\phi(Z)= \phi(z_1\e + z_2\edag)=x_1^{\delta_0}+ y_1^{\delta_1} +x_2^{\delta_2}+y_2^{\delta_3}  $ on $J_P^Q$ then $\displaystyle\frac{\partial ^{
\vec  \alpha,\sigma,\phi}}{\partial Z_{\vartheta \varphi, a^+ } }$ is a  
 the bicomplex weighted  fractal  Cauchy-Riemann type operator $\frac{\partial ^{
 \alpha}}{\partial Z_{\vartheta \varphi, a^+ } }$ extending the function theory commented in before. So for 
the bicomplex weighted  fractal function theory  we  have,  as   particular consequences,   Stokes and Borel-Pompeiu type formulas and then the Cauchy Theorem and Formula. 
\item Given $0<\delta_0,\delta_1,\delta_2,\delta_3<1$. If   $ \alpha_0=
 \alpha_1=\alpha_2=\alpha_3=1$  and  $\phi(Z)= \phi(z_1\e + z_2\edag)=x_1^{\delta_0}+ y_1^{\delta_1} +x_2^{\delta_2}+y_2^{\delta_3}  $ on $J_P^Q$  then     $ \displaystyle\frac{\partial ^{
\vec  \alpha,\sigma,\phi}}{\partial Z_{\vartheta \varphi, a^+ } }$ is a  
 the bicomplex weighted proportional and  fractal  Cauchy-Riemann type operator which induces a natural extension of the function theory commented in the previous case. 
\item Given $0<\delta_0,\delta_1,\delta_2,\delta_3<1$  and  $\phi(Z)=  x_1^{\delta_0}+ y_1^{\delta_1} +x_2^{\delta_2}+y_2^{\delta_3}  $ on $J_P^Q$  then     $ \displaystyle\frac{\partial ^{
\vec  \alpha,\sigma,\phi}}{\partial Z_{\vartheta \varphi, a^+ } }$ is a the bicomplex weighted fractional and  fractal  Cauchy-Riemann type operator.
\end{enumerate}

\section*{Statements and Declarations}
\subsection*{Funding} This work is supported in part by Instituto Polit\'ecnico Nacional (grant numbers SIP20232103, SIP20230312) and CONACYT.
\subsection*{Competing Interests} The authors have no known competing financial interests or personal relationships that could have appeared to influence the work reported in this paper.
\subsection*{Author contributions} All authors accept responsibility for the entire content of this manuscript and approved its final version. 
\subsection*{Availability of data and material} Not applicable
\subsection*{Code availability} Not applicable
\subsection*{ORCID}
\noindent
Jos\'e Oscar Gonz\'alez-Cervantes: https://orcid.org/0000-0003-4835-5436\\
Juan Bory-Reyes: https://orcid.org/0000-0002-7004-1794

\end{document}